\def \To{\longrightarrow}
\def \Hom{\operatorname{Hom}}
\def \Shuffle{\operatorname{Shuffle}}
\def \Vec{\operatorname{Vec}}
\def \Ker{\operatorname{Ker}}
\def \H{\operatorname{H}}
\def \id{\operatorname{id}}
\def \op{\operatorname{op}}
\def \R{\mathcal{R}}
\def \E{\mathcal{E}}
\def \k{\mathbbm{k}}
\def \o{\omega}
\numberwithin{equation}{section}
\newtheorem{theorem}{Theorem}[section]
\newtheorem{lemma}[theorem]{Lemma}
\newtheorem{proposition}[theorem]{Proposition}
\newtheorem{corollary}[theorem]{Corollary}
\newtheorem{remark}[theorem]{Remark}
\begin{document}

\title[Cocycle formulas on finite abelian groups with applications]{EXPLICIT COCYCLE FORMULAS ON FINITE ABELIAN GROUPS with \\ APPLICATIONS to braided linear Gr-categories and \\ Dijkgraaf-Witten invariants$^\dag$}\thanks{\footnotesize $^\dag$Supported by NSFC 11431010, 11471186, 11571199 and 11571329.}

\subjclass[2010]{20J06, 18D10, 57R56} 

\keywords{group cocycle, Gr-category, DW invariant}

\author{Hua-Lin Huang, Zheyan Wan* and Yu Ye} 
\thanks{*Corresponding author.}

\address{Fujian Province University Key Laboratory of Computational Science, School of Mathematical Sciences, Huaqiao University, Quanzhou 362021, China} 
\email{hualin.huang@hqu.edu.cn}

\address{School of Mathematical Sciences, University of Science and Technology of China, Hefei 230026, China} \email{wanzy@mail.ustc.edu.cn}

\address{School of Mathematical Sciences, University of Science and Technology of China, Hefei 230026, China} 
\email{yeyu@ustc.edu.cn}

\date{}
\maketitle

\begin{abstract}
We provide explicit and unified formulas for the cocycles of all degrees on the normalized bar resolutions of finite abelian groups. This is achieved by constructing a chain map from the normalized bar resolution to a Koszul-like resolution for any given finite abelian group. With a help of the obtained cocycle formulas, we determine all the braided linear Gr-categories and compute the Dijkgraaf-Witten Invariants of the $n$-torus for all $n.$
\end{abstract}

\section{Introduction}
Throughout, let $\k$ be an algebraically closed field of characteristic zero and let $\k^*$ denote the multiplicative group $\k-\{0\}.$ Unless otherwise specified, all algebraic structures and linear operations are over $\k.$ Our main aim is to provide explicit and unified formulas for the cocycles on the normalized bar resolutions (normalized cocycles) of finite abelian groups. Some applications to braided linear Gr-categories and Dijkgraaf-Witten Invariants (DW invariant) are also considered.

The cohomology groups of finite abelian groups are computable thanks to the well known Lyndon-Hochschild-Serre spectral sequence \cite{hs,lyndon}. However, the explicit formulas of normalized cocycles are not clear in literatures. Such explicit formulas of normalized cocycles, instead of the cohomology groups, are necessary in many respects of mathematics and physics. Besides the connections to braided linear Gr-categories and DW invariants involved in the present paper, normalized 2-cocycles are necessary in projective representation theory of finite groups \cite{frucht, kar}; normalized 3-cocycles are indispensable in the classification program of pointed finite tensor categories and quasi-quantum groups \cite{eg1, eg2, g, qha6, bgrc1, qha3};  normalized cocycles of all degrees are very important in the theory of symmetry protected topological orders \cite{cw, cw2, spt}. 

Our approach of formulating the normalized cocycles is straightforward and elementary. First we construct a Koszul-like resolution of a finite abelian group $G$ by tensoring the minimal resolutions of cyclic factors of $G$ and give a complete set of representatives of cocycles for this resolution. Then we construct a chain map from the normalized bar resolution to this Koszul-like resolution. Finally we get the desired explicit and unified formulas of normalized cocycles on $G$ by pulling back those on the Koszul-like resolution along the chain map. We remark that, in principle, the method of Lyndon-Hochschild-Serre spectral sequence may also help one formulate explicit forms of normalized cocycles with nearly as much effort as we need here. 

Here is a brief description of the content. In Section 2, we provide formulas of normalized cocycles of all degrees on any finite abelian groups. In Section 3, we use the formula of normalized 3-cocycles to determine the braided monoidal structures on linear Gr-categories.
In Section 4, we give a formula for the Dijkgraaf-Witten invariant of the $n$-torus for all $n$ and obtain the dimension formula for irreducible projective representations of an arbitrary finite abelian group.

\section{Explicit formulas of normalized cocycles on finite abelian groups}
In this section, we use freely the concepts and notations about group cohomology in the book \cite{w} of Weibel. Let $G$ be a group and $(B_{\bullet},\partial_{\bullet})$ be its normalized bar resolution. Applying $\Hom_{\mathbb{Z}G}(-,\k^{\ast})$ one gets a complex $(B^{\ast}_{\bullet},\partial^{\ast}_{\bullet}).$ Denote the group of normalized $n$-cocycles by $Z^n(G,\k^*),$ which is $\Ker \partial^*_n.$ In general, it is hard to determine $Z^n(G,\k^*)$ directly as the normalized bar resolution is far too large. Our strategy of overcoming this is to get first a simpler resolution of $G$ whose cocycles are easy to compute and then construct a chain map from the normalized bar resolution to it which will help to determine $Z^n(G,\k^*)$ eventually.

\subsection{A Koszul-like resolution} 
From now on let $G$ be a finite abelian group. Write $G=\mathbb{Z}_{m_{1}}\times\cdots \times\mathbb{Z}_{m_{n}}$ where $m_i|m_{i+1}$ for $1\le i\le n-1$ and for every $\mathbb{Z}_{m_{i}}$ fix a generator $g_{i}$ for $1\leq i\leq n.$ It is well known that the following periodic sequence is a free resolution of the
trivial $\mathbb{Z}_{m_{i}}$-module $\mathbb{Z}:$
\begin{equation}\label{resolution}
\cdots\longrightarrow \mathbb{Z}\mathbb{Z}_{m_{i}}\stackrel{T_{i}}\longrightarrow
\mathbb{Z}\mathbb{Z}_{m_{i}}\stackrel{N_{i}}\longrightarrow\mathbb{Z}\mathbb{Z}_{m_{i}}\stackrel{T_{i}}\longrightarrow
\mathbb{Z}\mathbb{Z}_{m_{i}}\stackrel{N_{i}}\longrightarrow
\mathbb{Z}\longrightarrow 0,\end{equation}
where $T_{i}=g_{i}-1$ and $N_{i}=\sum\limits_{j=0}^{m_{i}-1}g_{i}^{j}$. 

Consider the tensor product of the above periodic resolutions of the cyclic factors of $G.$ The resulting complex, denoted by 
$(K_{\bullet},d_{\bullet}),$ is as follows. For each sequence  $a_{1},\ldots,a_{n}$ of nonnegative integers, let $\Phi(a_{1},\ldots,a_{n})$ be a free
generator in degree $a_{1}+\cdots+a_{n}$. Thus
$$K_{m}:=\bigoplus_{a_{1}+\cdots+a_{n}=m} (\mathbb{Z}G)\Phi(a_{1},\ldots,a_{n}).$$ For all $1\leq i\leq n,$ define
$$d_{i}(\Phi(a_{1},\ldots,a_{n}))=\left \{
\begin{array}{lll} 0, &\;\;\;\;a_{i}=0;
\\ (-1)^{\sum_{l<i}a_{l}}N_{i}\Phi(a_{1},\ldots,a_{i}-1,\ldots,a_{n}), & \;\;\;\;0\neq a_{i}\;\textrm{even;}
\\(-1)^{\sum_{l<i}a_{l}}T_{i}\Phi(a_{1},\ldots,a_{i}-1,\ldots,a_{n}), &
\;\;\;\;0\neq a_{i}\;\textrm{odd.}
\end{array} \right.$$
The differential $d$ is set to be $d_{1}+\cdots +d_{n}$. Then 
$(K_{\bullet},d_{\bullet})$ is a free resolution of the trivial $\mathbb{Z}G$-module $\mathbb{Z}$. The main goal of this subsection is to determine the explicit cocycles of this Koszul-like resolution.

For the convenience of the exposition, we fix some notations before moving on. For any $1\le r_1<\cdots<r_l\le n$, define $\Phi_{r_1^{\lambda_1}\cdots r_l^{\lambda_l}}:=\Phi(0,\dots,\lambda_1,\dots,\lambda_l,\dots,0)$ where $\lambda_i\ge1$ lies in the $r_i$-th position. If $\lambda_i=1$ for some $1\le i\le l$, sometimes we drop it for brevity.
It is clear that any cochain $f\in\Hom_{\mathbb{Z}G}(K_k,\k^*)$ is uniquely determined by its values on $\Phi_{r_1^{\lambda_1}\cdots r_l^{\lambda_l}}$. Write $f_{r_1^{\lambda_1}\cdots r_l^{\lambda_l}}=f(\Phi_{r_1^{\lambda_1}\cdots r_l^{\lambda_l}})$.

\begin{theorem}\label{complete}
The following \begin{equation} \label{ck}  \left\{ f\in \Hom_{\mathbb{Z}G}(K_k,\k^*)
\left|
  \begin{array}{llll}
    f_{r_1^{\lambda_1}\cdots r_l^{\lambda_l}}=1\emph{ if }\lambda_1\emph{ is even}, \\
   f_{r_1^{\lambda_1}\cdots r_l^{\lambda_l}}=\zeta_{m_{r_1}}^{a_{r_1^{\lambda_1}\cdots r_l^{\lambda_l}}}\emph{ if }\lambda_1\emph{ is odd}\\
     \emph{and}  \ 0\leq a_{r_1^{\lambda_1}\cdots r_l^{\lambda_l}}<m_{r_1} \  \emph{for} \ 1\leq r_1<\cdots<r_l\leq n\\
     \emph{where}\ \lambda_1+\cdots+\lambda_l=k,\ \lambda_i\ge1\ \emph{for}\ 1\le i\le l\\
      \end{array}
\right. \right\} \end{equation}
makes a complete set of representatives of $k$-cocycles of the complex $(K_{\bullet}^*,d_{\bullet}^*)$. 
\end{theorem}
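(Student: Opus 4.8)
The statement means that $\mathcal N_k$, the set displayed in \eqref{ck}, is a set of $k$-cocycles of $(K^*_\bullet,d^*_\bullet)$ with $\mathcal N_k\cap B^k=\{1\}$ and $Z^k=\mathcal N_k\cdot B^k$, where $Z^k,B^k$ are the $k$-cocycles and $k$-coboundaries; equivalently, every $k$-cocycle is cohomologous to exactly one member of $\mathcal N_k$. Note first that $\mathcal N_k$ is a subgroup of $\Hom_{\Z G}(K_k,\k^*)$: the condition ``$f_{r_1^{\lambda_1}\cdots}=\zeta_{m_{r_1}}^{a},\ 0\le a<m_{r_1}$'' merely says $f_{r_1^{\lambda_1}\cdots}^{\,m_{r_1}}=1$. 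The plan is: make $d^*$ explicit, verify $\mathcal N_k\subseteq Z^k$ by a short computation, and then prove both $Z^k=\mathcal N_k\cdot B^k$ and $\mathcal N_k\cap B^k=\{1\}$ by one and the same normalization trick.

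Since $\k^*$ is a trivial $\Z G$-module, every $f\in\Hom_{\Z G}(K_k,\k^*)$ satisfies $f(T_i\Phi)=f(g_i\Phi)f(\Phi)^{-1}=1$ and $f(N_i\Phi)=f(\Phi)^{m_i}$, so substituting $d=d_1+\cdots+d_n$ into $f$ yields, with $e_i$ the $i$-th unit vector (so that $\Phi(\mathbf b-e_i)=\Phi(b_1,\dots,b_i-1,\dots,b_n)$) and $\Phi(\mathbf b)$ of degree $k+1$,
\[
(d^*f)\bigl(\Phi(\mathbf b)\bigr)=\prod_{i\,:\ 0\neq b_i\ \textrm{even}} f\bigl(\Phi(\mathbf b-e_i)\bigr)^{(-1)^{\sum_{l<i}b_l}\,m_i},
\]
the odd entries dropping out; hence $f\in Z^k$ iff this product is $1$ for every $\mathbf b$. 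The argument now rests on one elementary observation: writing $r$ for the least index with $b_r\neq0$, every $i$ occurring in the product has $i\ge r$, and $\Phi(\mathbf b-e_i)$ again has least support index $r$, with $r$-th entry $b_r$ when $i>r$ and $b_r-1$ (which is odd and $\ge1$) when $i=r$; in particular $\mathbf b\mapsto\mathbf b-e_r$ is a bijection from the degree-$k$ generators with $\lambda_1$ even onto the degree-$(k-1)$ generators with $\lambda_1$ odd. Given this, $\mathcal N_k\subseteq Z^k$ is immediate: for $f\in\mathcal N_k$ each factor $f(\Phi(\mathbf b-e_i))^{\pm m_i}$ equals $1$, because $\Phi(\mathbf b-e_i)$ has either $\lambda_1$ even (so $f=1$) or $\lambda_1$ odd (so $f$ is an $m_r$-th root of unity, while $m_r\mid m_i$ since $m_i\mid m_{i+1}$; and if $i=r$ one raises such a root to its $m_r$-th power).

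For $Z^k=\mathcal N_k\cdot B^k$, start from any $f\in Z^k$. Using that $\k^*$ is divisible, choose $h\in\Hom_{\Z G}(K_{k-1},\k^*)$ equal to $1$ on every generator with $\lambda_1$ even and with $h(\Phi(\mathbf a))^{m_r}=f(\Phi(\mathbf a+e_r))$ on every generator $\Phi(\mathbf a)$ with $\lambda_1$ odd ($r$ its least support index); this is well defined because $\mathbf a\mapsto\mathbf a+e_r$ is the inverse of the bijection above. For a degree-$k$ generator $\Phi(\mathbf b)$ with $\lambda_1$ even, the term $i=r$ of $(d^*h)(\Phi(\mathbf b))$ is $h(\Phi(\mathbf b-e_r))^{m_r}=f(\Phi(\mathbf b))$ (the sign being $+1$ since $r$ is least in the support) while the terms $i>r$ involve $h$ only at $\lambda_1$-even generators and hence are $1$; so $d^*h$ and $f$ agree on all $\lambda_1$-even generators. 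Replacing $f$ by $f\cdot(d^*h)^{-1}$ (still a cocycle) we may assume $f=1$ on all $\lambda_1$-even generators; then the cocycle equation for a degree-$(k+1)$ generator $\Phi(\mathbf b)$ with $b_r$ even collapses — its terms $i>r$ again involving $f$ only at $\lambda_1$-even generators — to $f(\Phi(\mathbf b-e_r))^{m_r}=1$, which by the bijection says $f^{\,m_r}=1$ on every $\lambda_1$-odd generator. Thus $f\in\mathcal N_k$, and the original cocycle lies in $\mathcal N_k\cdot B^k$.

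Finally, $\mathcal N_k\cap B^k=\{1\}$ follows by running the identical device one degree lower: if $d^*h\in\mathcal N_k$, choose $p\in\Hom_{\Z G}(K_{k-2},\k^*)$ (again by divisibility and the analogous bijection) so that $d^*p$ agrees with $h$ on all $\lambda_1$-even generators of degree $k-1$, put $h':=h\cdot(d^*p)^{-1}$, so that $h'$ is $1$ on those generators and $d^*h'=d^*h$; evaluating $d^*h'\in\mathcal N_k$ on the $\lambda_1$-even generators of degree $k$ gives, just as above, $(h')^{m_r}=1$ on every $\lambda_1$-odd generator of degree $k-1$, whence $h'\in\mathcal N_{k-1}\subseteq Z^{k-1}$ and therefore $d^*h=d^*h'=1$. (The cases $k=0,1$, where $B^1=0$, are immediate.) The step that needs care is precisely this normalization: one must verify that $h$ and $p$ exist — which uses divisibility of $\k^*$ together with the bijection $\mathbf b\mapsto\mathbf b-e_r$ — and that after normalizing, every ``cross term'' $i>r$ appearing in $d^*h$ (resp.\ $d^*p$) at a $\lambda_1$-even generator involves the auxiliary cochain only at $\lambda_1$-even generators, so that it disappears; the sign exponents $(-1)^{\sum_{l<i}b_l}$ are irrelevant once the value in question is $1$ or a root of unity.
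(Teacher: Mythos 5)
Your proof is correct and follows essentially the same route as the paper's: normalize any cocycle by a coboundary (built via divisibility of $\k^*$) so that it is trivial on the $\lambda_1$-even generators, then read off from the cocycle condition that it is an $m_{r_1}$-th root of unity on the $\lambda_1$-odd ones, and run the same normalization one degree lower for uniqueness. Your write-up additionally makes explicit two points the paper leaves implicit — the verification that every element of the displayed set is in fact a cocycle, and the bijection $\mathbf b\mapsto\mathbf b-e_r$ underlying the normalization — but the underlying argument is the same.
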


\begin{proof}
Suppose $f\in \Hom_{\mathbb{Z}G}(K_k,\k^*)$ is a $k$-cocycle. We will show that $f$ is cohomologous to one in \eqref{ck}. Let $g\in \Hom_{\mathbb{Z}G}(K_{k-1},\k^*)$ be a $(k-1)$-cochain given by $g_{r_1^{\mu_1}\cdots r_l^{\mu_l}}=1$ if $\mu_1$ is even and 
$g_{r_1^{\mu_1}\cdots r_l^{\mu_l}}=(f_{r_1^{\mu_1+1}\cdots r_l^{\mu_l}})^{\frac{1}{m_{r_1}}}$ if $\mu_1$ is odd. Consider $f'=f-d^*g.$ Then clearly $f'_{r_1^{\lambda_1}\cdots r_l^{\lambda_l}}=1$ if $\lambda_1$ is even.
If $\lambda_1$ is odd, then by the cocycle condition for $f'$ we have 
$$(f'_{r_1^{\lambda_1}\cdots r_l^{\lambda_l}})^{m_{r_1}}\prod_{\begin{array}{cc}2\le i\le l\\\lambda_i\text{ even}\end{array}}(f'_{r_1^{\lambda_1+1}\cdots r_i^{\lambda_i-1}\cdots r_l^{\lambda_l}})^{(-1)^{\sum\limits_{j<i}\lambda_j+1}m_{r_i}}=1.$$
Hence $(f'_{r_1^{\lambda_1}\cdots r_l^{\lambda_l}})^{m_{r_1}}=1$, so $f'_{r_1^{\lambda_1}\cdots r_l^{\lambda_l}}=\zeta_{m_{r_1}}^{a_{r_1^{\lambda_1}\cdots r_l^{\lambda_l}}}$ for some $0\le a_{r_1^{\lambda_1}\cdots r_l^{\lambda_l}}<m_{r_1}$.

Suppose that $f_1$ and $f_2$ are two cocycles in \eqref{ck} satisfying $f_1-f_2=d^*h$ for some $(k-1)$-cochain $h\in \Hom_{\mathbb{Z}G}(K_{k-1},\k^*).$ Similarly as above, after subtracting a $(k-1)$-coboundary from $h$, we can assume that $h_{r_1^{\mu_1}\cdots r_l^{\mu_l}}=1$ if $\mu_1$ is even.
If $\lambda_1$ is even, then 
\begin{eqnarray*}
(f'_1-f'_2)_{r_1^{\lambda_1}\cdots r_l^{\lambda_l}}&=&(h_{r_1^{\lambda_1-1}\cdots r_l^{\lambda_l}})^{m_{r_1}}\prod_{\begin{array}{cc}2\le i\le l\\\lambda_i\text{ even}\end{array}}(h_{r_1^{\lambda_1}\cdots r_i^{\lambda_i-1}\cdots r_l^{\lambda_l}})^{(-1)^{\sum\limits_{j<i}\lambda_j}m_{r_i}}\\
&=&(h_{r_1^{\lambda_1-1}\cdots r_l^{\lambda_l}})^{m_{r_1}}=1.
\end{eqnarray*}
If $\lambda_1$ is odd, then by the preceding equation and the condition $m_i|m_{i+1}$ for $1\le i\le n-1$ we have
$$(f_1-f_2)_{r_1^{\lambda_1}\cdots r_l^{\lambda_l}}=\prod_{\begin{array}{cc}2\le i\le l\\\lambda_i\text{ even}\end{array}}(h_{r_1^{\lambda_1}\cdots r_i^{\lambda_i-1}\cdots r_l^{\lambda_l}})^{(-1)^{\sum\limits_{j<i}\lambda_j}m_{r_i}}=1.$$
Hence $f_1=f_2$.
\end{proof}

\begin{corollary}
If $G=\mathbb{Z}_{m_{1}}\times\cdots \times\mathbb{Z}_{m_{n}}$ where $m_i|m_{i+1}$ for $1\le i\le n-1$, then
$$\H^k(G,\k^*)=\prod_{r=1}^n\mathbb{Z}_{m_r}^{\sum\limits_{j=1}^k(-1)^{k+j}\binom{n-r+j-1}{j-1}}.$$
\end{corollary}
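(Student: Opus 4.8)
The plan is to read off $\H^k(G,\k^*)$ directly from Theorem \ref{complete} and then evaluate the resulting enumeration by a short generating-function computation; throughout I take $k\ge1$.

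First I would upgrade the set \eqref{ck} from ``a complete set of representatives'' to a genuine model of the cohomology group. Since $G$ acts trivially on $\k^*$, the pointwise product of two cochains subject to the constraints in \eqref{ck} again satisfies them --- a product of $m_{r_1}$-th roots of unity is an $m_{r_1}$-th root of unity, and $1\cdot1=1$ --- so \eqref{ck} is a subgroup of $Z^k(G,\k^*)=\Ker d_k^*$. Because $(K_\bullet,d_\bullet)$ is a free resolution of the trivial module $\Z$, the complex $(K_\bullet^*,d_\bullet^*)$ computes $\H^\bullet(G,\k^*)$; hence the canonical surjection $Z^k(G,\k^*)\twoheadrightarrow\H^k(G,\k^*)$, restricted to the subgroup \eqref{ck}, is a homomorphism of abelian groups, which Theorem \ref{complete} asserts is a bijection and therefore an isomorphism. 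Finally \eqref{ck} is manifestly isomorphic to $\bigoplus\Z_{m_{r_1}}$, the sum running over all pairs formed by a chain $1\le r_1<\cdots<r_l\le n$ together with a composition $\lambda_1+\cdots+\lambda_l=k$ with $\lambda_i\ge1$ for all $i$ and $\lambda_1$ odd; such a pair contributes one copy of $\Z_{m_{r_1}}$ through the free parameter $a_{r_1^{\lambda_1}\cdots r_l^{\lambda_l}}\in\Z_{m_{r_1}}$.

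It then remains to count, for each fixed $r$, the number $e_r$ of such pairs with $r_1=r$. As the remaining indices $r_2<\cdots<r_l$ run over the $(l-1)$-element subsets of $\{r+1,\dots,n\}$, we get $e_r=\sum_{l\ge1}\binom{n-r}{l-1}c(k,l)$, where $c(k,l)$ is the number of compositions of $k$ into $l$ positive parts with odd first part. The corresponding generating function in a variable $x$ marking $k$ is $\sum_k c(k,l)x^k=\frac{x}{1-x^2}\left(\frac{x}{1-x}\right)^{l-1}$, the first part contributing $x+x^3+x^5+\cdots$ and each later part $x+x^2+x^3+\cdots$. Summing against $\binom{n-r}{l-1}$ over $l\ge1$ and invoking the binomial theorem yields
\[
\sum_k e_r\,x^k=\frac{x}{1-x^2}\left(1+\frac{x}{1-x}\right)^{n-r}=\frac{x}{(1+x)(1-x)^{n-r+1}}.
\]
Taking the coefficient of $x^k$, equivalently the coefficient of $x^{k-1}$ in $\left(\sum_{i\ge0}(-1)^ix^i\right)\left(\sum_{p\ge0}\binom{n-r+p}{p}x^p\right)$, and reindexing by $j=p+1$, one obtains $e_r=\sum_{j=1}^k(-1)^{k-j}\binom{n-r+j-1}{j-1}$, which is exactly the claimed exponent because $(-1)^{k-j}=(-1)^{k+j}$.

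The argument is short and I foresee no real obstacle. The one point deserving care lies in the first paragraph: verifying that \eqref{ck} is actually closed under multiplication and that the bijection provided by Theorem \ref{complete} is the restriction of the canonical surjection $Z^k\to\H^k$, so that the group structure transfers faithfully. Once this is settled, the stated formula drops out of the routine generating-function computation above.
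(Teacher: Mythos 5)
Your argument is correct, and its skeleton is the same as the paper's: both read off from Theorem \ref{complete} that $\H^k(G,\k^*)\cong\prod_{r=1}^n\Z_{m_r}^{e_r}$, where $e_r=\sum_{l\ge1}\binom{n-r}{l-1}s_{k,l}$ and $s_{k,l}$ counts compositions of $k$ into $l$ positive parts with odd first part. The two proofs diverge only in how this sum is evaluated. You compute the generating function $\sum_k e_r x^k=\frac{x}{(1+x)(1-x)^{n-r+1}}$ and extract the coefficient of $x^k$, which gives the alternating sum in one pass. The paper instead observes the bijection $s_{k,l}=t_{k+1,l}$ (shift the first part by one to change its parity) and $s_{k,l}+t_{k,l}=\binom{k-1}{l-1}$, deduces via Vandermonde that $N_{k,r}+N_{k-1,r}=\binom{n-r+k-1}{k-1}$, and unwinds this recursion; both routes are equally short and the generating-function version has the small advantage of not requiring an induction on $k$ at the end. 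One genuine value your write-up adds is the first paragraph: Theorem \ref{complete} as stated only gives a complete set of representatives (a bijection of sets with $\H^k$), and to conclude a product decomposition as abelian groups one does need to check, as you do, that the set \eqref{ck} is closed under pointwise multiplication and that the bijection is the restriction of the canonical surjection $Z^k\to\H^k$, hence a group isomorphism. The paper passes over this point silently, so your care here is warranted rather than redundant.
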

\begin{proof}
By Theorem \ref{complete}, $\H^k(G,\k^*)=\prod_{r=1}^n\mathbb{Z}_{m_r}^{N_{k,r}}$ where 
\begin{eqnarray*}
N_{k,r}&=&\#\{(r_2,\dots,r_l,\lambda_1,\dots,\lambda_l)\in\mathbb{N}^{2l-1}|r<r_2<\cdots<r_l\le n,\lambda_1+\cdots+\lambda_l=k,\lambda_1\text{ odd}\}\\
&=&\sum_{l=1}^k\binom{n-r}{l-1}\#\{(\lambda_1,\dots,\lambda_l)\in\mathbb{N}^l|\lambda_1+\cdots+\lambda_l=k,\lambda_1\text{ odd}\}.
\end{eqnarray*}
Denote $s_{k,l}=\#\{(\lambda_1,\dots,\lambda_l)\in\mathbb{N}^l|\lambda_1+\cdots+\lambda_l=k,\lambda_1\text{ odd}\}$ and $t_{k,l}=\#\{(\lambda_1,\dots,\lambda_l)\in\mathbb{N}^l|\lambda_1+\cdots+\lambda_l=k,\lambda_1\text{ even}\}.$ Then $s_{k,l}=t_{k+1,l}$ and $s_{k,l}+t_{k,l}=\binom{k-1}{l-1}$.
Hence $$N_{k,r}+N_{k-1,r}=\sum_{l=1}^k\binom{n-r}{l-1}\binom{k-1}{l-1}=\binom{n-r+k-1}{k-1}.$$
Therefore, $N_{k,r}=\sum\limits_{j=1}^k(-1)^{k+j}\binom{n-r+j-1}{j-1}$.
\end{proof}

\subsection{A chain map from $(B_{\bullet},\partial_{\bullet})$ to $(K_{\bullet},d_{\bullet})$}
We need some more notations to present our chain map. For any positive integers $s$ and $r,$ let $[\frac{s}{r}]$ denote the integer part of $\frac{s}{r}$ and let $s_r'$ denote the remainder of the division of $s$ by $r.$ When there is no risk of confusion, we omit the subscript in $s_r'.$ It is easy to observe that 
\begin{equation} \label{f}
 [\frac{s+t_r'}{r}]=[\frac{s+t-[\frac{t}{r}]r}{r}]=[\frac{s+t}{r}]-[\frac{t}{r}]
\end{equation}
for any three natural numbers $s, \ t $ and $r.$ We need the following technical lemma for later discussions.

\begin{lemma}\label{tl}
Let $r$ be a positive integer. For any $2l+1$ natural numbers $a_1, \ a_2, \dots, a_{2l+1},$ we have the following equation
\begin{eqnarray*}
 &&\sum_{i=1}^l [\frac{a_{2l+1}+a_{2l}}{r}] \cdots [\frac{a_{2i+3}+a_{2i+2}}{r}]  \left( [\frac{a_{2i+1}+(a_{2i}+a_{2i-1})'}{r}] - [\frac{(a_{2i+1}+a_{2i})'+a_{2i-1}}{r}] \right) \cdot \\
 && \qquad [\frac{a_{2i-2}+a_{2i-3}}{r}] \cdots [\frac{a_{2}+a_{1}}{r}]  \\
 &=& [\frac{a_{2l+1}+a_{2l}}{r}] \cdots [\frac{a_{3}+a_{2}}{r}] - [\frac{a_{2l}+a_{2l-1}}{r}] \cdots [\frac{a_{2}+a_{1}}{r}] .
 \end{eqnarray*}
\end{lemma}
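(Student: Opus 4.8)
The plan is to prove the identity by induction on $l$, treating it as a telescoping sum. Let me write $b_j := [\frac{a_{2j}+a_{2j-1}}{r}]$ for the "block" factors that appear on both sides, and let me denote the $i$-th summand on the left by
\[
S_i := b_l b_{l-1}\cdots b_{i+2}\Bigl([\tfrac{a_{2i+1}+(a_{2i}+a_{2i-1})'}{r}]-[\tfrac{(a_{2i+1}+a_{2i})'+a_{2i-1}}{r}]\Bigr)b_{i-1}\cdots b_1,
\]
so the claim is $\sum_{i=1}^l S_i = b_l b_{l-1}\cdots b_2 \cdot [\text{something}] - [\text{something}]\cdot b_l\cdots$; more precisely the right side is a difference of two length-$l$ products, one ``shifted up'' and one ``shifted down'' in the indices. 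The base case $l=1$ should reduce, after applying \eqref{f} twice, to the plain statement $[\frac{a_3+a_2}{r}]-[\frac{a_2+a_1}{r}]$, which is what the right side asserts.

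The key observation is that the middle factor of $S_i$ telescopes: using \eqref{f}, one has $[\frac{a_{2i+1}+(a_{2i}+a_{2i-1})'}{r}] = [\frac{a_{2i+1}+a_{2i}+a_{2i-1}}{r}]-[\frac{a_{2i}+a_{2i-1}}{r}]$ and likewise $[\frac{(a_{2i+1}+a_{2i})'+a_{2i-1}}{r}] = [\frac{a_{2i+1}+a_{2i}+a_{2i-1}}{r}]-[\frac{a_{2i+1}+a_{2i}}{r}]$, so that the parenthesized difference in $S_i$ equals $[\frac{a_{2i+1}+a_{2i}}{r}]-[\frac{a_{2i}+a_{2i-1}}{r}]$. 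Thus each $S_i$ is itself a difference of two length-$l$ products of ``block'' type, where in the first product the $i$-th block is $[\frac{a_{2i+1}+a_{2i}}{r}]$ (indices shifted up by one at position $i$) and in the second it is $b_i=[\frac{a_{2i}+a_{2i-1}}{r}]$. After this rewriting the sum over $i$ collapses: the ``shifted up at position $i$'' term cancels against a term coming from the neighbouring index, and what survives is exactly the difference of the fully-up-shifted product and the fully-down-shifted product on the right-hand side. I would make this precise either by a direct telescoping bookkeeping or, more cleanly, by induction: strip off the $i=l$ term, apply \eqref{f} to it, and match the remainder with the induction hypothesis applied to $a_1,\dots,a_{2l-1}$ after suitably absorbing the leftover factor.

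The main obstacle I anticipate is purely notational: keeping the indices straight through the telescoping, since each $S_i$ produces two products that must be paired with products coming from $S_{i\pm 1}$, and the signs/positions have to be tracked carefully across the whole sum. There is no conceptual difficulty once \eqref{f} is in hand — the identity \eqref{f} is exactly the ``cocycle-type'' relation that makes everything collapse — but writing the cancellation cleanly without drowning in subscripts will require choosing good notation (e.g. writing products as $\prod_{j\neq i} b_j$ and only displaying the one modified factor). I would therefore phrase the whole argument as an induction on $l$ to avoid having to exhibit the full telescoping pattern explicitly.
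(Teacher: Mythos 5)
Your proposal is correct and follows exactly the paper's argument: apply \eqref{f} to rewrite the parenthesized difference as $[\frac{a_{2i+1}+a_{2i}}{r}]-[\frac{a_{2i}+a_{2i-1}}{r}]$, after which the sum telescopes, leaving precisely the two products on the right-hand side. The paper phrases the final step as an ``obvious elimination of consecutive terms'' rather than an induction, but this is only a presentational difference.
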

\begin{proof}
By \eqref{f}, we have \[ [\frac{a_{2i+1}+(a_{2i}+a_{2i-1})'}{r}] - [\frac{(a_{2i+1}+a_{2i})'+a_{2i-1}}{r}] = [\frac{a_{2i+1}+a_{2i}}{r}] - [\frac{a_{2i}+a_{2i-1}}{r}].  \]
Then the lemma follows by an obvious elimination of consecutive terms. 
\end{proof}

Now we are ready to give a chain map from the normalized bar resolution $(B_{\bullet},\partial_{\bullet})$ to the Koszul-like resolution $(K_{\bullet},d_{\bullet}).$ Recall that $B_{m}$ is the free $\mathbb{Z}G$-module on the set of all symbols
$[h_{1},\ldots,h_{m}]$ with $h_{i}\in G$ and $m\geq 1$. In case $m=0$, the symbol $[\; ]$ denote $1\in \mathbb{Z}G$ and the map $\partial_{0}=\epsilon:\;B_{0}\to \mathbb{Z}$ sends
$[\; ]$ to $1$. For a more concise formulation, denote $(g_i)_r:=\sum\limits_{j=0}^{r-1}g_i^j$ for $1\leq i\leq n$ in the following.

The first four terms of the chain map, which will be used for later applications, are as follows:
\begin{eqnarray*}
F_{1}: &&B_{1}\To K_{1}\\
&&[g_{1}^{i_{1}}\cdots g_{n}^{i_{n}}]\mapsto
\sum_{s=1}^{n}g_{1}^{i_{1}}\cdots g_{s-1}^{i_{s-1}}(g_{s})_{i_{s}}\Phi_{s};\\
F_{2}: &&B_{2}\To K_{2}\\
&&[g_{1}^{i_{1}}\cdots g_{n}^{i_{n}},g_{1}^{j_{1}}\cdots g_{n}^{j_{n}}]\mapsto
\sum_{s=1}^{n}g_{1}^{i_{1}+j_{1}}\cdots g_{s-1}^{i_{s-1}+j_{s-1}}[\frac{i_{s}+j_{s}}{m_{s}}]\Phi_{s^2}\\
&&-\sum_{1\leq s<t\leq n}g_{1}^{i_{1}}\cdots g_{t-1}^{i_{t-1}}g_{1}^{j_{1}}\cdots g_{s-1}^{j_{s-1}}(g_{s})_{j_{s}}(g_{t})_{i_{t}}\Phi_{st};\\
F_{3}: &&B_{3}\To K_{3}\\
&&[g_{1}^{i_{1}}\cdots g_{n}^{i_{n}},g_{1}^{j_{1}}\cdots g_{n}^{j_{n}},g_{1}^{k_{1}}\cdots g_{n}^{k_{n}}]\mapsto\\
&& \sum_{r=1}^{n}[\frac{j_{r}+k_{r}}{m_{r}}]g_{1}^{j_{1}+k_{1}}\cdots g_{r-1}^{j_{r-1}+k_{r-1}}g_{1}^{i_{1}}\cdots g_{r-1}^{i_{r-1}}(g_{r})_{i_{r}}\Phi_{r^3}+\\
&&\sum_{1\leq r<t\leq n}[\frac{j_{r}+k_{r}}{m_{r}}]g_{1}^{j_{1}+k_{1}}\cdots g_{r-1}^{j_{r-1}+k_{r-1}}g_{1}^{i_{1}}\cdots g_{t-1}^{i_{t-1}}(g_{t})_{i_{t}}\Phi_{r^2t}+\\
&&\sum_{1\leq r<t\leq n}[\frac{i_{t}+j_{t}}{m_{t}}]g_{1}^{i_{1}+j_{1}}\cdots g_{t-1}^{i_{t-1}+j_{t-1}}g_{1}^{k_{1}}\cdots g_{r-1}^{k_{r-1}}(g_{r})_{k_{r}}\Phi_{rt^2}-\\
&&\sum_{1\leq r<s<t\leq n}g_{1}^{i_{1}}\cdots g_{t-1}^{i_{t-1}}(g_{t})_{i_{t}}g_{1}^{j_{1}}\cdots g_{s-1}^{j_{s-1}}(g_{s})_{j_{s}}g_{1}^{k_{1}}\cdots g_{r-1}^{k_{r-1}}(g_{r})_{k_{r}}\Phi_{rst};\\
F_{4}: &&B_{4}\To K_{4}\\
&&[g_{1}^{i_{1}}\cdots g_{n}^{i_{n}},g_{1}^{j_{1}}\cdots g_{n}^{j_{n}},g_{1}^{k_{1}}\cdots g_{n}^{k_{n}},g_{1}^{l_{1}}\cdots g_{n}^{l_{n}}]\mapsto\\
&& \sum_{r=1}^{n}[\frac{k_{r}+l_{r}}{m_{r}}][\frac{i_{r}+j_{r}}{m_{r}}]g_{1}^{i_{1}+j_{1}+k_{1}+l_{1}}\cdots g_{r-1}^{i_{r-1}+j_{r-1}+k_{r-1}+l_{r-1}}\Phi_{r^4}+\\
&&\sum_{1\leq r<s\leq n}[\frac{k_{r}+l_{r}}{m_{r}}]g_{1}^{k_{1}+l_{1}}\cdots g_{r-1}^{k_{r-1}+l_{r-1}}[\frac{i_{s}+j_{s}}{m_{s}}]g_{1}^{i_{1}+j_{1}}\cdots g_{s-1}^{i_{s-1}+j_{s-1}}\Phi_{r^2s^2}-\\
&&\sum_{1\leq r<s\leq n}[\frac{j_{s}+k_{s}}{m_{s}}]g_{1}^{j_{1}+k_{1}}\cdots g_{s-1}^{j_{s-1}+k_{s-1}}g_{1}^{l_{1}}\cdots g_{r-1}^{l_{r-1}}(g_{r})_{l_{r}}g_{1}^{i_{1}}\cdots g_{s-1}^{i_{s-1}}(g_{s})_{i_{s}}\Phi_{rs^3}-\\
&&\sum_{1\leq r<s\leq n}[\frac{k_{r}+l_{r}}{m_{r}}]g_{1}^{k_{1}+l_{1}}\cdots g_{r-1}^{k_{r-1}+l_{r-1}}g_{1}^{j_{1}}\cdots g_{r-1}^{j_{r-1}}(g_{r})_{j_{r}}g_{1}^{i_{1}}\cdots g_{s-1}^{i_{s-1}}(g_{s})_{i_{s}}\Phi_{r^3s}-\\
&&\sum_{1\leq r<s<t\leq n}[\frac{k_{r}+l_{r}}{m_{r}}]g_{1}^{k_{1}+l_{1}}\cdots g_{r-1}^{k_{r-1}+l_{r-1}}g_{1}^{j_{1}}\cdots g_{s-1}^{j_{s-1}}(g_{s})_{j_{s}}g_{1}^{i_{1}}\cdots g_{t-1}^{i_{t-1}}(g_{t})_{i_{t}}\Phi_{r^2st}-\\
&&\sum_{1\leq r<s<t\leq n}[\frac{j_{s}+k_{s}}{m_{s}}]g_{1}^{j_{1}+k_{1}}\cdots g_{s-1}^{j_{s-1}+k_{s-1}}g_{1}^{l_{1}}\cdots g_{r-1}^{l_{r-1}}(g_{r})_{l_{r}}g_{1}^{i_{1}}\cdots g_{t-1}^{i_{t-1}}(g_{t})_{i_{t}}\Phi_{rs^2t}-\\
&&\sum_{1\leq r<s<t\leq n}[\frac{i_{t}+j_{t}}{m_{t}}]g_{1}^{i_{1}+j_{1}}\cdots g_{t-1}^{i_{t-1}+j_{t-1}}g_{1}^{l_{1}}\cdots g_{r-1}^{l_{r-1}}(g_{r})_{l_{r}}g_{1}^{k_{1}}\cdots g_{s-1}^{k_{s-1}}(g_{s})_{k_{s}}\Phi_{rst^2}+\\
&&\sum_{1\leq r<s<t<u\leq n}g_{1}^{l_{1}}\cdots g_{r-1}^{l_{r-1}}(g_{r})_{l_{r}}g_{1}^{k_{1}}\cdots g_{s-1}^{k_{s-1}}(g_{s})_{k_{s}}g_{1}^{j_{1}}\cdots g_{t-1}^{j_{t-1}}(g_{t})_{j_{t}}g_{1}^{i_{1}}\cdots g_{u-1}^{i_{u-1}}(g_{u})_{i_{u}}\Phi_{rstu}
\end{eqnarray*}
for $0\leq i_{r},j_{r},k_{r},l_{r}< m_{r}$ and $1\leq r\leq n$.

In general, let $\alpha:=(\alpha_{11},\dots,\alpha_{1n},\dots,\alpha_{k1},\dots,\alpha_{kn})$ where each $\alpha_{ij} \in [0, m_j)$ and is viewed as an integer modulo $m_j$ for all $1\leq i\leq k.$ We also write $\alpha=(\alpha_1,\dots,\alpha_k)$ where $\alpha_u=(\alpha_{u1},\dots,\alpha_{un})$ for $1\leq u\leq k.$
For brevity, in the following we denote the group element $g_{1}^{\alpha_{i1}}\cdots g_{n}^{\alpha_{in}}$ by $g^{\alpha_i}.$ Given any $1\leq r\leq n$, $[a,b]\subseteq[1,k]$, $a,b\in\mathbb{N}$ and $\alpha$, denote 
$$\xi_{r,[a,b]}^{\alpha}:=\left\{
\begin{array}{ll}[\frac{\alpha_{br}+\alpha_{b-1,r}}{m_{r}}]\cdots[\frac{\alpha_{a+1,r}+\alpha_{ar}}{m_{r}}]g_{1}^{\alpha_{b1}+\cdots+\alpha_{a1}}\cdots g_{r-1}^{\alpha_{b,r-1}+\cdots+\alpha_{a,r-1}},&\;\;\;a-b\text{ odd;}\\

[\frac{\alpha_{br}+\alpha_{b-1,r}}{m_{r}}]\cdots[\frac{\alpha_{a+2,r}+\alpha_{a+1,r}}{m_{r}}]g_{1}^{\alpha_{b1}+\cdots+\alpha_{a,1}}\cdots g_{r-1}^{\alpha_{b,r-1}+\cdots+\alpha_{a,r-1}}(g_{r})_{\alpha_{ar}},&\;\;\;a-b\text{ even.}
\end{array}\right.$$
Define 
\begin{eqnarray}\label{Fkabelian}
&& F_{k} \colon B_{k}\To K_{k}  \\
&&[g^{\alpha_1},\dots,g^{\alpha_{k}}]\mapsto \sum_{l=1}^{k}\sum_{\begin{array}{ccc}1\leq r_{1}<\cdots<r_{l}\leq n\\\lambda_1+\cdots+\lambda_l=k\\\lambda_i\ge1\text{ for }1\le i\le l\end{array}}(-1)^{\sum\limits_{1\leq i<j\leq l}\lambda_{i}\lambda_{j}}\xi_{r_{1},[a_{1},b_{1}]}^{\alpha}\cdots\xi_{r_{l},[a_{l},b_{l}]}^{\alpha}\Phi_{r_1^{\lambda_1}\cdots r_l^{\lambda_l}}  \notag
\end{eqnarray}
where $a_u = {\sum\limits_{i=u+1}^l \lambda_i}+1$ and $b_u = {\sum\limits_{i=u}^l \lambda_i}$ for $1 \le u \le l.$ Clearly, the interval $[1,k]$ is the disjoint union of the $[a_i,b_i]$'s.

\begin{proposition} \label{chainmap}
The following diagram is commutative.
\begin{figure}[hbt]
\begin{picture}(150,50)(50,-40)
\put(0,0){\makebox(0,0){$ \cdots$}}\put(10,0){\vector(1,0){20}}\put(40,0){\makebox(0,0){$B_{3}$}}
\put(50,0){\vector(1,0){20}}\put(80,0){\makebox(0,0){$B_{2}$}}
\put(90,0){\vector(1,0){20}}\put(120,0){\makebox(0,0){$B_{1}$}}
\put(130,0){\vector(1,0){20}}\put(160,0){\makebox(0,0){$B_{0}$}}
\put(170,0){\vector(1,0){20}}\put(200,0){\makebox(0,0){$\mathbb{Z}$}}
\put(210,0){\vector(1,0){20}}\put(240,0){\makebox(0,0){$0$}}

\put(0,-40){\makebox(0,0){$ \cdots$}}\put(10,-40){\vector(1,0){20}}\put(40,-40){\makebox(0,0){$K_{3}$}}
\put(50,-40){\vector(1,0){20}}\put(80,-40){\makebox(0,0){$K_{2}$}}
\put(90,-40){\vector(1,0){20}}\put(120,-40){\makebox(0,0){$K_{1}$}}
\put(130,-40){\vector(1,0){20}}\put(160,-40){\makebox(0,0){$K_{0}$}}
\put(170,-40){\vector(1,0){20}}\put(200,-40){\makebox(0,0){$\mathbb{Z}$}}
\put(210,-40){\vector(1,0){20}}\put(240,-40){\makebox(0,0){$0$}}

\put(40,-10){\vector(0,-1){20}}
\put(80,-10){\vector(0,-1){20}}
\put(120,-10){\vector(0,-1){20}}
\put(158,-10){\line(0,-1){20}}\put(160,-10){\line(0,-1){20}}
\put(198,-10){\line(0,-1){20}}\put(200,-10){\line(0,-1){20}}

\put(60,5){\makebox(0,0){$\partial_{3}$}}
\put(100,5){\makebox(0,0){$\partial_{2}$}}
\put(140,5){\makebox(0,0){$\partial_{1}$}}

\put(60,-35){\makebox(0,0){$d$}}
\put(100,-35){\makebox(0,0){$d$}}
\put(140,-35){\makebox(0,0){$d$}}

\put(50,-20){\makebox(0,0){$F_{3}$}}
\put(90,-20){\makebox(0,0){$F_{2}$}}
\put(130,-20){\makebox(0,0){$F_{1}$}}

\end{picture}
\end{figure}
\end{proposition}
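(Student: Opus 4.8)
The plan is to verify the chain-map identity $d\circ F_{k}=F_{k-1}\circ\partial_{k}$ directly, evaluated on an arbitrary free generator $[g^{\alpha_{1}},\dots,g^{\alpha_{k}}]$ of $B_{k}$, the identity $\epsilon F_{0}=\epsilon$ at the bottom being obvious. Note that a chain map lifting $\id_{\mathbb{Z}}$ exists by the comparison theorem, so the content of the proposition is that the \emph{explicit} family \eqref{Fkabelian} is such a map. Since both $dF_{k}[g^{\alpha_{1}},\dots,g^{\alpha_{k}}]$ and $F_{k-1}\partial_{k}[g^{\alpha_{1}},\dots,g^{\alpha_{k}}]$ are $\mathbb{Z}G$-linear combinations of the generators $\Phi_{r_{1}^{\mu_{1}}\cdots r_{l}^{\mu_{l}}}$ of $K_{k-1}$, it suffices to fix one such generator and match the coefficients on the two sides.

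On the left-hand side, expand $F_{k}[g^{\alpha_{1}},\dots,g^{\alpha_{k}}]$ via \eqref{Fkabelian} and apply $d=d_{1}+\cdots+d_{n}$. The operator $d_{r_{i}}$ acting on the summand indexed by $\Phi_{r_{1}^{\lambda_{1}}\cdots r_{l}^{\lambda_{l}}}$ lowers $\lambda_{i}$ by one, contributes the sign $(-1)^{\lambda_{1}+\cdots+\lambda_{i-1}}$, and inserts $N_{r_{i}}$ when $\lambda_{i}$ is even, respectively $T_{r_{i}}=g_{r_{i}}-1$ when $\lambda_{i}$ is odd, immediately in front of $\xi^{\alpha}_{r_{i},[a_{i},b_{i}]}$. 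Using the elementary identities $(g_{r})_{m_{r}}=N_{r}$, $(g_{r})_{a}T_{r}=g_{r}^{a}-1$, and $(g_{r})_{a}+g_{r}^{a}(g_{r})_{b}=(g_{r})_{(a+b)'}+[\tfrac{a+b}{m_{r}}]\,N_{r}$ for $0\le a,b<m_{r}$, these $N$'s and $T$'s are absorbed into the $g$-monomial parts of the neighbouring $\xi$-factors, so that the left-hand coefficient of a fixed $\Phi_{r_{1}^{\mu_{1}}\cdots r_{l}^{\mu_{l}}}$ becomes a signed sum of products of $(g_{r})$-sums and integer-part factors $[\tfrac{\cdot}{m_{r}}]$.

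On the right-hand side, use $\partial_{k}[g^{\alpha_{1}},\dots,g^{\alpha_{k}}]=g^{\alpha_{1}}[g^{\alpha_{2}},\dots,g^{\alpha_{k}}]+\sum_{u=1}^{k-1}(-1)^{u}[g^{\alpha_{1}},\dots,g^{\alpha_{u}}g^{\alpha_{u+1}},\dots,g^{\alpha_{k}}]+(-1)^{k}[g^{\alpha_{1}},\dots,g^{\alpha_{k-1}}]$, apply $F_{k-1}$ to each summand, and rewrite each inner product $g^{\alpha_{u}}g^{\alpha_{u+1}}$ as its normalized representative $g^{(\alpha_{u}+\alpha_{u+1})'}$, the carry in the $r$-th slot being $[\tfrac{\alpha_{ur}+\alpha_{u+1,r}}{m_{r}}]$. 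The two extremal faces ($u=0$ and $u=k$) involve no carry and reproduce the terms that come on the left from the $N_{r}$- or $T_{r}$-insertions on an extremal block. For an interior face one distinguishes two cases according to the position of $u,u+1$ relative to the interval decomposition $[1,k]=\coprod_{i}[a_{i},b_{i}]$ attached to $\Phi_{r_{1}^{\lambda_{1}}\cdots r_{l}^{\lambda_{l}}}$ in \eqref{Fkabelian}: if $u,u+1$ lie in one block $[a_{i},b_{i}]$, then by \eqref{f} the carry changes $\xi^{\alpha}_{r_{i},[a_{i},b_{i}]}$ by a difference of products of integer parts, and Lemma \ref{tl} telescopes the sum of these differences over the admissible range of $u$ into precisely the pair of terms produced on the left by $d_{r_{i}}$; if $u,u+1$ straddle two adjacent blocks, the face map merges the two neighbouring $\xi$-factors and matches the corresponding left-hand terms directly. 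The signs are reconciled by an elementary parity count involving $(-1)^{\sum_{i<j}\lambda_{i}\lambda_{j}}$, $(-1)^{u}$ and $(-1)^{\lambda_{1}+\dots+\lambda_{i-1}}$ and the relation $\lambda_{1}+\cdots+\lambda_{l}=k$.

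I expect the main obstacle to be purely organizational: making the term-by-term comparison airtight, i.e.\ checking that the case distinctions---interior versus extremal face, face inside a block versus across two adjacent blocks, and the parities of the exponents $\lambda_{i}$ and $\mu_{i}$---are exhaustive and mutually exclusive, so that every $\mathbb{Z}G$-monomial attached to a given $\Phi_{r_{1}^{\mu_{1}}\cdots r_{l}^{\mu_{l}}}$ is produced exactly once on each side. Lemma \ref{tl} together with \eqref{f} is precisely what forces the integer-part cancellations to work in the first and most delicate case. In practice it is prudent to run the argument first for $k\le 4$ against the explicitly displayed maps $F_{1},\dots,F_{4}$ as a consistency check before treating general $k$.
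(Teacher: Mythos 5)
Your plan is correct and follows essentially the same route as the paper's proof: fix a generator $\Phi_{r_1^{\lambda_1}\cdots r_l^{\lambda_l}}$ of $K_{k-1}$, compute its coefficient in $F_{k-1}\partial_k$ (absorbing the carries $[\tfrac{\alpha_{ur}+\alpha_{u+1,r}}{m_r}]$ from normalizing $g^{\alpha_u}g^{\alpha_{u+1}}$) and in $dF_k$ (absorbing the $N_r$'s and $T_r$'s into adjacent $\xi$-factors), and match them via the telescoping of Lemma \ref{tl} applied to the interior faces within each block $[a_i,b_i]$, split by the parity of $b_i-a_i$. This is exactly the paper's argument, including the identification of \eqref{f} and Lemma \ref{tl} as the mechanism forcing the integer-part cancellations.
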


\begin{proof} 
We start with some conventions. Denote  
$$\E_{r^{\lambda}}:=\left\{
\begin{array}{ll}
N_r,&\;\;\;\lambda\text{ even;}\\

T_r,&\;\;\;\lambda\text{ odd.}
\end{array}\right.$$
Then 
\begin{eqnarray*}
d\Phi_{r_1^{\lambda_1}\cdots r_l^{\lambda_l}}&=&\E_{r_1^{\lambda_1}}\Phi_{r_1^{\lambda_1-1}\cdots r_l^{\lambda_l}}+
(-1)^{\lambda_1}\E_{r_2^{\lambda_2}}\Phi_{r_1^{\lambda_1}r_2^{\lambda_2-1}\cdots r_l^{\lambda_l}}  +\cdots+
(-1)^{\lambda_1+\cdots+\lambda_{l-1}}\E_{r_l^{\lambda_l}}\Phi_{r_1^{\lambda_1}\cdots r_l^{\lambda_l-1}} \ .
\end{eqnarray*}
For any given $\alpha=(\alpha_{11},\dots,\alpha_{1n},\dots,\alpha_{k1},\dots,\alpha_{kn}),$ let $\alpha'=(\alpha_2,\dots,\alpha_k), \ \alpha''=(\alpha_1,\dots,\alpha_{k-1})$ and $\alpha'_u=(\alpha_1,\dots,\alpha_{u-1},\alpha_u+\alpha_{u+1},\alpha_{u+2},\dots,\alpha_k), \quad \forall 1\leq u\leq k-1.$

With the above notations, $\partial_k([g^{\alpha_1}, \dots, g^{\alpha_k}])$ becomes 
\[
g^{\alpha_1}[g^{\alpha_2}, \dots, g^{\alpha_k}]+\sum_{u=1}^{k-1}(-1)^u [g^{\alpha_1},\dots, 
g^{\alpha_{u-1}}, g^{\alpha_u+\alpha_{u+1}}, g^{\alpha_{u+2}}, \dots, g^{\alpha_k}]+(-1)^k[g^{\alpha_1}, \dots, g^{\alpha_{k-1}}].
\]
Then the coefficient of $\Phi_{r_1^{\lambda_1}\cdots r_l^{\lambda_l}}$ in $F_{k-1}\partial_k([g^{\alpha_1}, \dots, g^{\alpha_k}])$ is 
\begin{equation} \label{c1}
(-1)^{\sum\limits_{1\leq i<j\leq l}\lambda_i\lambda_j} \left( g^{\alpha_1}\xi_{r_1,[a_1,b_1]}^{\alpha'}\cdots\xi_{r_l,[a_l,b_l]}^{\alpha'}+\sum_{u=1}^{k-1}(-1)^u\xi_{r_1,[a_1,b_1]}^{\alpha'_u}\cdots\xi_{r_l,[a_l,b_l]}^{\alpha'_u}
+(-1)^k\xi_{r_1,[a_1,b_1]}^{\alpha''}\cdots\xi_{r_l,[a_l,b_l]}^{\alpha''} \right)
\end{equation}
where $a_u = {\sum_{i=u+1}^l \lambda_i}+1$ and $b_u = {\sum_{i=u}^l \lambda_i}.$
For $1\leq r\leq n$, $[a,b]\subseteq[1,k-1]$, $a,b\in\mathbb{N}$ and $\alpha$, we have
$$ \xi_{r,[a,b]}^{\alpha'}=\xi_{r,[a+1,b+1]}^{\alpha}, \  \ \xi_{r,[a,b]}^{\alpha''}=\xi_{r,[a,b]}^{\alpha} \quad \text{and} \quad 
\xi_{r,[a,b]}^{\alpha'_u}=\left\{
\begin{array}{ll}
\xi_{r,[a+1,b+1]}^{\alpha},  \ \ \ &\text{ if } u<a;\\
\xi_{r,[a,b]}^{\alpha},  \ \ \ &\text{ if } u>b.
\end{array}\right.
$$
Hence
\begin{eqnarray*}
&&\sum_{u=1}^{k-1}(-1)^u\xi_{r_1,[a_1,b_1]}^{\alpha'_u}\cdots\xi_{r_l,[a_l,b_l]}^{\alpha'_u}\\
&=&\sum_{i=1}^l \sum_{u=a_i}^{b_i}(-1)^u \xi_{r_1,[a_1+1,b_1+1]}^{\alpha}\cdots\xi_{r_{i-1},[a_{i-1}+1,b_{i-1}+1]}^{\alpha} \xi_{r_i,[a_i,b_i]}^{\alpha'_u}\xi_{r_{i+1},[a_{i+1},b_{i+1}]}^{\alpha}\cdots\xi_{r_l,[a_l,b_l]}^{\alpha} \\
&=&\sum_{i=1}^l  \xi_{r_1,[a_1+1,b_1+1]}^{\alpha}\cdots\xi_{r_{i-1},[a_{i-1}+1,b_{i-1}+1]}^{\alpha} \xi_{r_{i+1},[a_{i+1},b_{i+1}]}^{\alpha}\cdots\xi_{r_l,[a_l,b_l]}^{\alpha} \sum_{u=a_i}^{b_i}(-1)^u \xi_{r_i,[a_i,b_i]}^{\alpha'_u}.
\end{eqnarray*}
Therefore we can rewrite \eqref{c1} as
\begin{eqnarray} \label{c1'}
&& (-1)^{\sum\limits_{1\leq i<j\leq l}\lambda_i\lambda_j} \left( g^{\alpha_1}\xi_{r_1,[a_1+1,b_1+1]}^{\alpha}\cdots\xi_{r_l,[a_l+1,b_l+1]}^{\alpha} +(-1)^k\xi_{r_1,[a_1,b_1]}^{\alpha}\cdots\xi_{r_l,[a_l,b_l]}^{\alpha} \right.  \\
&&\qquad + \sum_{i=1}^l  \xi_{r_1,[a_1+1,b_1+1]}^{\alpha}\cdots\xi_{r_{i-1},[a_{i-1}+1,b_{i-1}+1]}^{\alpha} \xi_{r_{i+1},[a_{i+1},b_{i+1}]}^{\alpha}\cdots\xi_{r_l,[a_l,b_l]}^{\alpha} \sum_{u=a_i}^{b_i}(-1)^u \left.  \xi_{r_i,[a_i,b_i]}^{\alpha'_u}  \right). \notag
\end{eqnarray}
It remains to compute $\sum\limits_{u=a_i}^{b_i}(-1)^u \xi_{r_i,[a_i,b_i]}^{\alpha'_u}.$ This is split into two cases according to the parity of $b_i-a_i.$ 

If $b_i-a_i$ is odd, then 
\begin{eqnarray} \label{odd}
&&\sum_{u=a_i}^{b_i}(-1)^u\xi_{r_i,[a_i,b_i]}^{\alpha'_u}\\
&=&\left( (-1)^{a_i}[\frac{\alpha_{b_i+1,r_i}+\alpha_{b_i,r_i}}{m_{r_i}}]\cdots[\frac{\alpha_{a_i+2,r_i}+(\alpha_{a_i,r_i}+\alpha_{a_i+1,r_i})'}{m_{r_i}}] \right.\notag\\
&&\quad +(-1)^{a_i+1}[\frac{\alpha_{b_i+1,r_i}+\alpha_{b_i,r_i}}{m_{r_i}}]\cdots[\frac{(\alpha_{a_i+1,r_i}+\alpha_{a_i+2,r_i})'+\alpha_{a_i,r_i}}{m_{r_i}}] \notag\\
&&\quad +\cdots \left. +(-1)^{b_i}[\frac{(\alpha_{b_i,r_i}+\alpha_{b_i+1,r_i})'+\alpha_{b_i-1,r_i}}{m_{r_i}}]\cdots[\frac{\alpha_{a_i+1,r_i}+\alpha_{a_i,r_i}}{m_{r_i}}]  \right) \cdot \notag \\ 
&& \qquad g_1^{\alpha_{b_i+1,1}+\cdots+\alpha_{a_i,1}}\cdots g_{r_i-1}^{\alpha_{b_i+1,r_i-1}+\cdots+\alpha_{a_i,r_i-1}} \notag \\
&\overset{Lemma \ref{tl}}{=}&\left( (-1)^{a_i}[\frac{\alpha_{b_i+1,r_i}+\alpha_{b_i,r_i}}{m_{r_i}}]\cdots[\frac{\alpha_{a_i+2,r_i}+\alpha_{a_i+1,r_i}}{m_{r_i}}] +  (-1)^{b_i}[\frac{\alpha_{b_i,r_i}+\alpha_{b_i-1,r_i}}{m_{r_i}}]\cdots [\frac{\alpha_{a_i+1,r_i}+\alpha_{a_i,r_i}}{m_{r_i}}] \right) \cdot \notag\\
&& \qquad g_1^{\alpha_{b_i+1,1}+\cdots+\alpha_{a_i,1}}\cdots g_{r_i-1}^{\alpha_{b_i+1,r_i-1}+\cdots+\alpha_{a_i,r_i-1}} \notag\\
&=&(-1)^{b_i}\xi_{r_i,[a_i,b_i]}^{\alpha}g_1^{\alpha_{b_i+1,1}}\cdots g_{r_i-1}^{\alpha_{b_i+1,r_i-1}}+(-1)^{a_i}\xi_{r_i,[a_i+1,b_i+1]}^{\alpha}g_1^{\alpha_{a_i,1}}\cdots g_{r_i-1}^{\alpha_{a_i,r_i-1}}. \notag
\end{eqnarray}

If $b_i-a_i$ is even, then similarly we have
\begin{eqnarray} \label{even}
&& \qquad \sum_{u=a_i}^{b_i}(-1)^u\xi_{r_i,[a_i,b_i]}^{\alpha'_u}\\
&=&(-1)^{a_i}[\frac{\alpha_{b_i+1,r_i}+\alpha_{b_i,r_i}}{m_{r_i}}]\cdots[\frac{\alpha_{a_i+3,r_i}+\alpha_{a_i+2,r_i}}{m_{r_i}}]g_1^{\alpha_{b_i+1,1}+\alpha_{a_i,1}}\cdots g_{r_i-1}^{\alpha_{b_i+1,r_i-1}+\cdots+\alpha_{a_i,r_i-1}}  (g_{r_i})_{(\alpha_{a_i,r_i}+\alpha_{a_i+1,r_i})'}  \notag\\
&&+\left( (-1)^{a_i+1}[\frac{\alpha_{b_i+1,r_i}+\alpha_{b_i,r_i}}{m_{r_i}}]
\cdots[\frac{\alpha_{a_i+3,r_i}+(\alpha_{a_i+1,r_i}+\alpha_{a_i+2,r_i})'}{m_{r_i}}] \right.  \notag\\
&&\qquad +(-1)^{a_i+2}[\frac{\alpha_{b_i+1,r_i}+\alpha_{b_i,r_i}}{m_{r_i}}]\cdots[\frac{(\alpha_{a_i+2,r_i}+\alpha_{a_i+3,r_i})'+\alpha_{a_i+1,r_i}}{m_{r_i}}] \notag\\
&&\qquad +\cdots \notag \\
&&\qquad \left. +(-1)^{b_i}[\frac{(\alpha_{b_i,r_i}+\alpha_{b_i+1,r_i})'+\alpha_{b_i-1,r_i}}{m_{r_i}}]\cdots[\frac{\alpha_{a_i+2,r_i}+\alpha_{a_i+1,r_i}}{m_{r_i}}] \right)\cdot \notag \\
&&\qquad \qquad g_1^{\alpha_{b_i+1,1}+\cdots+\alpha_{a_i,1}}\cdots g_{r_i-1}^{\alpha_{b_i+1,r_i-1}+\cdots+\alpha_{a_i,r_i-1}}(g_{r_i})_{\alpha_{a_i,r_i}}\notag\\
&=&(-1)^{a_i}[\frac{\alpha_{b_i+1,r_i}+\alpha_{b_i,r_i}}{m_{r_i}}]\cdots[\frac{\alpha_{a_i+3,r_i}+\alpha_{a_i+2,r_i}}{m_{r_i}}]g_1^{\alpha_{b_i+1,1}+\alpha_{a_i,1}}\cdots g_{r_i-1}^{\alpha_{b_i+1,r_i-1}+\cdots+\alpha_{a_i,r_i-1}}\cdot \notag\\
&&\qquad \left( (g_{r_i})_{\alpha_{a_i,r_i}+\alpha_{a_i+1,r_i}}-[\frac{\alpha_{a_i,r_i}+\alpha_{a_i+1,r_i}}{m_{r_i}}]N_{r_i} \right) \notag\\
&&+\left( (-1)^{a_i+1}[\frac{\alpha_{b_i+1,r_i}+\alpha_{b_i,r_i}}{m_{r_i}}]\cdots[\frac{\alpha_{a_i+3,r_i}+\alpha_{a_i+2,r_i}}{m_{r_i}}] +  (-1)^{b_i}[\frac{\alpha_{b_i,r_i}+\alpha_{b_i-1,r_i}}{m_{r_i}}]\cdots [\frac{\alpha_{a_i+2,r_i}+\alpha_{a_i+1,r_i}}{m_{r_i}}] \right) \cdot \notag\\
&&\qquad g_1^{\alpha_{b_i+1,1}+\cdots+\alpha_{a_i,1}}\cdots g_{r_i-1}^{\alpha_{b_i+1,r_i-1}+\cdots+\alpha_{a_i,r_i-1}}(g_{r_i})_{\alpha_{a_i,r_i}}\notag\\
&=&(-1)^{a_i}[\frac{\alpha_{b_i+1,r_i}+\alpha_{b_i,r_i}}{m_{r_i}}]\cdots[\frac{\alpha_{a_i+3,r_i}+\alpha_{a_i+2,r_i}}{m_{r_i}}]g_1^{\alpha_{b_i+1,1}+\alpha_{a_i,1}}\cdots g_{r_i-1}^{\alpha_{b_i+1,r_i-1}+\cdots+\alpha_{a_i,r_i-1}}\cdot \notag\\
&&\qquad \left( (g_{r_i})_{\alpha_{a_i,r_i}+\alpha_{a_i+1,r_i}}-(g_{r_i})_{\alpha_{a_i,r_i}}-[\frac{\alpha_{a_i,r_i}+\alpha_{a_i+1,r_i}}{m_{r_i}}]N_{r_i} \right)\notag\\
&&+  (-1)^{b_i}[\frac{\alpha_{b_i,r_i}+\alpha_{b_i-1,r_i}}{m_{r_i}}]\cdots [\frac{\alpha_{a_i+2,r_i}+\alpha_{a_i+1,r_i}}{m_{r_i}}] 
 g_1^{\alpha_{b_i+1,1}+\cdots+\alpha_{a_i,1}}\cdots g_{r_i-1}^{\alpha_{b_i+1,r_i-1}+\cdots+\alpha_{a_i,r_i-1}}(g_{r_i})_{\alpha_{a_i,r_i}}\notag\\
&=&(-1)^{a_i}\xi_{r_i,[a_i+1,b_i+1]}^{\alpha}g_1^{\alpha_{a_i,1}}\cdots g_{r_i}^{\alpha_{a_i,r_i}} + (-1)^{a_i+1}\xi_{r_i,[a_i,b_i+1]}^{\alpha} \E_{{r_i}^{b_i-a_i}} + (-1)^{b_i}\xi_{r_i,[a_i,b_i]}^{\alpha}g_1^{\alpha_{b_i+1,1}}\cdots g_{r_i-1}^{\alpha_{b_i+1,r_i-1}}. \notag
\end{eqnarray}

On the other hand, the term $\Phi_{r_1^{\lambda_1}\cdots r_l^{\lambda_l}}$ in $dF_k([g^{\alpha_1}, \dots, g^{\alpha_k}])$ comes from the differential of the terms 
\[ \Phi_{r_1^{\lambda_1+1}  r_2^{\lambda_2} \cdots r_l^{\lambda_l}}, \  \cdots, \  \Phi_{r_1^{\lambda_1} \cdots  r_{l-1}^{\lambda_{l-1}} r_l^{\lambda_l+1}}, \ \ \Phi_{s r_1^{\lambda_1}\cdots r_l^{\lambda_l}},\ \  \Phi_{r_1^{\lambda_1} s r_2^{\lambda_2}  \cdots r_l^{\lambda_l}}, \ \cdots, \  \Phi_{r_1^{\lambda_1} \cdots  r_{l-1}^{\lambda_{l-1}}  s r_l^{\lambda_l}}, \ \ \Phi_{r_1^{\lambda_1}\cdots r_l^{\lambda_l}s} \] 
in $F_k([g^{\alpha_1}, \dots, g^{\alpha_k}]).$ Therefore, its coefficient is 
\begin{eqnarray} \label{c2}
&&\sum_{1\leq u \leq l}(-1)^{\sum\limits_{1\leq i<j\leq l}\lambda_i\lambda_j+\sum\limits_{j \ne u} \lambda_j} \xi_{r_1,[a_1+1,b_1+1]}^{\alpha} \cdots \xi_{r_{u-1},[a_{u-1}+1,b_{u-1}+1]}^{\alpha} \xi_{r_u,[a_u,b_u+1]}^{\alpha} \cdot \\
&&\qquad \xi_{r_{u+1},[a_{u+1},b_{u+1}]}^{\alpha}  \cdots \xi_{r_l, [a_l, b_l]}^\alpha (-1)^{\lambda_1+\cdots+\lambda_{u-1}} \E_{r_u^{\lambda_u+1}} \notag \\
&+&\sum_{s=1}^{r_1-1}(-1)^{\sum\limits_{1\leq i<j\leq l}\lambda_i\lambda_j+\sum\limits_{i=1}^l \lambda_i} \xi_{r_1,[a_1,b_1]}^{\alpha} \cdots \xi_{r_l,[a_l, b_l]}^{\alpha}\xi_{s,[k,k]}^{\alpha}T_s\notag\\%
&+&\sum_{u=1}^{l-1} \sum_{s=r_u+1}^{r_{u+1}-1} (-1)^{\sum\limits_{1\leq i<j\leq l}\lambda_i\lambda_j+\sum\limits_{i=1}^l \lambda_i} \xi_{r_1,[a_1+1,b_1+1]}^{\alpha}\cdots \xi_{r_u,[a_u+1,b_u+1]}^{\alpha} \cdot \notag \\
&& \qquad \xi_{s, [a_u, a_u]} \xi_{r_{u+1},[a_{u+1},b_{u+1}]}^{\alpha} \cdots \xi_{r_l,[a_l,b_l]}^{\alpha}
 (-1)^{\lambda_1 + \cdots + \lambda_u} T_s \notag\\
&+&\sum_{s=r_l+1}^n(-1)^{\sum\limits_{1\leq i<j\leq l}\lambda_i\lambda_j+\sum\limits_{i=1}^l \lambda_i} \xi_{r_1,[a_1+1,b_1+1]}^{\alpha} \cdots \xi_{r_l,[a_l+1,b_l+1]}^{\alpha} \xi_{s,[1,1]}^{\alpha}(-1)^{\sum\limits_{i=1}^l\lambda_i}T_s. \notag 
\end{eqnarray}

Noting that $\xi_{s,[a,a]}^{\alpha}T_s=g_1^{\alpha_{a1}}\cdots g_{s-1}^{\alpha_{a,s-1}}(g_s^{\alpha_{a,s}}-1)$, then one has the following equations:
\begin{eqnarray*}
&&\sum_{s=1}^{r_1-1}(-1)^{\sum\limits_{1\leq i<j\leq l}\lambda_i\lambda_j+\sum\limits_{i=1}^l \lambda_i} \xi_{r_1,[a_1,b_1]}^{\alpha} \cdots \xi_{r_l,[a_l,b_l]}^{\alpha}\xi_{s,[k,k]}^{\alpha}T_s\\ 
&=&(-1)^{\sum\limits_{1\leq i<j\leq l}\lambda_i\lambda_j+\sum\limits_{i=1}^l \lambda_i} \xi_{r_1,[a_1,b_1]}^{\alpha} \cdots \xi_{r_l,[a_l,b_l]}^{\alpha} \sum_{s=1}^{r_1-1} \xi_{s,[k,k]}^{\alpha}T_s\\  
&=&(-1)^{\sum\limits_{1\leq i<j\leq l}\lambda_i\lambda_j+\sum\limits_{i=1}^l\lambda_i}\xi_{r_1,[a_1,b_1]}^{\alpha}\cdots\xi_{r_l,[a_l,b_l]}^{\alpha}(g_1^{\alpha_{k1}}\cdots g_{r_1-1}^{\alpha_{k,r_1-1}}-1),\\ &&\\
&&\sum_{u=1}^{l-1} \sum_{s=r_u+1}^{r_{u+1}-1} (-1)^{\sum\limits_{1\leq i<j\leq l}\lambda_i\lambda_j+\sum\limits_{i=1}^l \lambda_i} \xi_{r_1,[a_1+1,b_1+1]}^{\alpha}\cdots \xi_{r_u,[a_u+1,b_u+1]}^{\alpha} \cdot \notag \\
&& \qquad \xi_{s, [a_u, a_u]} \xi_{r_{u+1},[a_{u+1},b_{u+1}]}^{\alpha} \cdots \xi_{r_l,[a_l,b_l]}^{\alpha}
 (-1)^{\lambda_1 + \cdots + \lambda_u} T_s \notag\\
&=&\sum_{u=1}^{l-1}  (-1)^{\sum\limits_{1\leq i<j\leq l}\lambda_i\lambda_j+\sum\limits_{i=u+1}^l \lambda_i} \xi_{r_1,[a_1+1,b_1+1]}^{\alpha}\cdots \xi_{r_u,[a_u+1,b_u+1]}^{\alpha} \cdot \notag \\
&& \qquad \xi_{r_{u+1},[a_{u+1},b_{u+1}]}^{\alpha} \cdots \xi_{r_l,[a_l,b_l]}^{\alpha}
 \sum_{s=r_u+1}^{r_{u+1}-1}  \xi_{s, [a_u, a_u]} T_s \notag\\%
 &=&\sum_{u=1}^{l-1}  (-1)^{\sum\limits_{1\leq i<j\leq l}\lambda_i\lambda_j+\sum\limits_{i=u+1}^l \lambda_i} \xi_{r_1,[a_1+1,b_1+1]}^{\alpha}\cdots \xi_{r_u,[a_u+1,b_u+1]}^{\alpha} \cdot \notag \\
&& \qquad \xi_{r_{u+1},[a_{u+1},b_{u+1}]}^{\alpha} \cdots \xi_{r_l,[a_l,b_l]}^{\alpha}
 (g_1^{\alpha_{a_u 1}} \cdots g_{r_{u+1}-1} ^{\alpha_{a_u, r_{u+1}-1}} - g_1^{\alpha_{a_u 1}} \cdots g_{r_u}^{\alpha_{a_u, r_u}}), \notag\\%
  &&\\
&&\sum_{s=r_l+1}^n(-1)^{\sum\limits_{1\leq i<j\leq l}\lambda_i\lambda_j+\sum\limits_{i=1}^l \lambda_i} \xi_{r_1,[a_1+1, b_1+1]}^{\alpha} \cdots \xi_{r_l,[a_l+1,b_l+1]}^{\alpha} \xi_{s,[1,1]}^{\alpha}(-1)^{\sum_{i=1}^l\lambda_i}T_s\\
&=&(-1)^{\sum\limits_{1\leq i<j\leq l}\lambda_i\lambda_j} \xi_{r_1,[a_1+1, b_1+1]}^{\alpha} \cdots \xi_{r_l,[a_l+1,b_l+1]}^{\alpha} (g_1^{\alpha_{11}}\cdots g_n^{\alpha_{1n}}-g_1^{\alpha_{11}}\cdots g_{r_l}^{\alpha_{1,r_l}}).
\end{eqnarray*}
With these, \eqref{c2} becomes
\begin{eqnarray} \label{c3}
&&\sum_{1\leq u \leq l}(-1)^{\sum\limits_{1\leq i<j\leq l}\lambda_i\lambda_j+\sum\limits_{j=u+1}^l \lambda_j} \xi_{r_1,[a_1+1,b_1+1]}^{\alpha} \cdots \xi_{r_{u-1},[a_{u-1}+1,b_{u-1}+1]}^{\alpha}  \cdot \\
&&\qquad \xi_{r_u,[a_u,b_u+1]}^{\alpha} \xi_{r_{u+1},[a_{u+1},b_{u+1}]}^{\alpha}  \cdots \xi_{r_l, [a_l, b_l]}^\alpha \E_{r_u^{\lambda_u+1}} \notag \\
&+& (-1)^{\sum\limits_{1\leq i<j\leq l}\lambda_i\lambda_j+\sum\limits_{i=1}^l\lambda_i}\xi_{r_1,[a_1,b_1]}^{\alpha}\cdots\xi_{r_l,[a_l,b_l]}^{\alpha}(g_1^{\alpha_{k1}}\cdots g_{r_1-1}^{\alpha_{k,r_1-1}}-1) \notag \\
&+& \sum_{u=1}^{l-1}  (-1)^{\sum\limits_{1\leq i<j\leq l}\lambda_i\lambda_j+\sum\limits_{i=u+1}^l \lambda_i} \xi_{r_1,[a_1+1,b_1+1]}^{\alpha}\cdots \xi_{r_u,[a_u+1,b_u+1]}^{\alpha} \cdot \notag \\
&& \qquad \xi_{r_{u+1},[a_{u+1},b_{u+1}]}^{\alpha} \cdots \xi_{r_l,[a_l,b_l]}^{\alpha}
 (g_1^{\alpha_{a_u 1}} \cdots g_{r_{u+1}-1} ^{\alpha_{a_u, r_{u+1}-1}} - g_1^{\alpha_{a_u 1}} \cdots g_{r_u}^{\alpha_{a_u, r_u}}) \notag \\
&+& (-1)^{\sum\limits_{1\leq i<j\leq l}\lambda_i\lambda_j} \xi_{r_1,[a_1+1, b_1+1]}^{\alpha} \cdots \xi_{r_l,[a_l+1,b_l+1]}^{\alpha} (g_1^{\alpha_{11}}\cdots g_n^{\alpha_{1n}}-g_1^{\alpha_{11}}\cdots g_{r_l}^{\alpha_{1,r_l}}). \notag 
\end{eqnarray}
 
We need to prove that the two formulas \eqref{c1'} and \eqref{c3} are equal. 
By cancelling their obvious common terms, namely the first two terms of \eqref{c1'}, it suffices to prove
\begin{eqnarray*}
&&\sum_{i=1}^l  \xi_{r_1,[a_1+1,b_1+1]}^{\alpha}\cdots\xi_{r_{i-1},[a_{i-1}+1,b_{i-1}+1]}^{\alpha} \xi_{r_{i+1},[a_{i+1},b_{i+1}]}^{\alpha}\cdots\xi_{r_l,[a_l,b_l]}^{\alpha} \sum_{u=a_i}^{b_i}(-1)^u  \xi_{r_i,[a_i,b_i]}^{\alpha'_u} \\
&=&\sum_{u=1}^l (-1)^{\sum\limits_{i=u+1}^l\lambda_i} \xi_{r_1,[a_1+1,b_1+1]}^{\alpha} \cdots \xi_{r_{u-1},[a_{u-1}+1,b_{u-1}+1]}^{\alpha} \xi_{r_u,[a_u,b_u+1]}^{\alpha} \xi_{r_{u+1},[a_{u+1},b_{u+1}]}^{\alpha}  \cdots \xi_{r_l, [a_l, b_l]}^\alpha \E_{r_u^{\lambda_u+1}} \notag \\
&&+ \sum_{u=1}^{l-1}  (-1)^{\sum\limits_{i=u+1}^l \lambda_i} \xi_{r_1,[a_1+1,b_1+1]}^{\alpha}\cdots \xi_{r_u,[a_u+1,b_u+1]}^{\alpha}  \xi_{r_{u+1},[a_{u+1},b_{u+1}]}^{\alpha} \cdots \xi_{r_l,[a_l,b_l]}^{\alpha} \cdot \notag \\
&& \qquad (g_1^{\alpha_{a_u 1}} \cdots g_{r_{u+1}-1} ^{\alpha_{a_u, r_{u+1}-1}} - g_1^{\alpha_{a_u 1}} \cdots g_{r_u}^{\alpha_{a_u, r_u}}) \notag \\
&&+ (-1)^{\sum\limits_{i=1}^l\lambda_i}\xi_{r_1,[a_1,b_1]}^{\alpha}\cdots\xi_{r_l,[a_l,b_l]}^{\alpha} g_1^{\alpha_{k1}}\cdots g_{r_1-1}^{\alpha_{k,r_1-1}} - \xi_{r_1,[a_1+1, b_1+1]}^{\alpha} \cdots \xi_{r_l,[a_l+1,b_l+1]}^{\alpha} g_1^{\alpha_{11}}\cdots g_{r_l}^{\alpha_{1,r_l}}. \notag 
\end{eqnarray*}
Note  that the latter is equal to
\begin{eqnarray*}
&&\sum_{u=1}^l (-1)^{\sum\limits_{i=u+1}^l \lambda_i} \xi_{r_1,[a_1+1,b_1+1]}^{\alpha} \cdots \xi_{r_{u-1},[a_{u-1}+1,b_{u-1}+1]}^{\alpha} \xi_{r_u,[a_u,b_u+1]}^{\alpha} \xi_{r_{u+1},[a_{u+1},b_{u+1}]}^{\alpha}  \cdots \xi_{r_l, [a_l, b_l]}^\alpha  \E_{r_u^{\lambda_u+1}}\\
&&+ \sum_{u=1}^{l}  (-1)^{\sum\limits_{i=u}^l \lambda_i} \xi_{r_1,[a_1+1,b_1+1]}^{\alpha}\cdots \xi_{r_{u-1},[a_{u-1}+1,b_{u-1}+1]}^{\alpha}  \xi_{r_{u},[a_{u},b_{u}]}^{\alpha} \cdots \xi_{r_l,[a_l,b_l]}^{\alpha} g_1^{\alpha_{a_{u-1} 1}} \cdots g_{r_{u}-1} ^{\alpha_{a_{u-1}, r_{u}-1}} \notag \\
&&- \sum_{u=1}^{l}  (-1)^{\sum\limits_{i=u+1}^l \lambda_i} \xi_{r_1,[a_1+1,b_1+1]}^{\alpha}\cdots \xi_{r_u,[a_u+1,b_u+1]}^{\alpha}  \xi_{r_{u+1},[a_{u+1},b_{u+1}]}^{\alpha} \cdots \xi_{r_l,[a_l,b_l]}^{\alpha} g_1^{\alpha_{a_u 1}} \cdots g_{r_u}^{\alpha_{a_u, r_u}} \notag \\
&=&\sum_{u=1}^l  \xi_{r_1,[a_1+1,b_1+1]}^{\alpha}\cdots\xi_{r_{u-1},[a_{u-1}+1,b_{u-1}+1]}^{\alpha} \xi_{r_{u+1},[a_{u+1},b_{u+1}]}^{\alpha}\cdots\xi_{r_l,[a_l,b_l]}^{\alpha} \cdot \left( (-1)^{\sum\limits_{i=u+1}^l \lambda_i} \xi_{r_u,[a_u,b_u+1]}^{\alpha} \E_{r_u^{\lambda_u+1}}
\right. \\
&&\left. \qquad + (-1)^{\sum\limits_{i=u}^l \lambda_i}  \xi_{r_{u},[a_{u},b_{u}]}^{\alpha}  g_1^{\alpha_{a_{u-1} 1}} \cdots g_{r_{u}-1} ^{\alpha_{a_{u-1}, r_{u}-1}} 
- (-1)^{\sum\limits_{i=u+1}^l \lambda_i}  \xi_{r_u,[a_u+1,b_u+1]}^{\alpha}  g_1^{\alpha_{a_u 1}} \cdots g_{r_u}^{\alpha_{a_u, r_u}} \right).
\end{eqnarray*}
Now it is enough to verify that 
\begin{eqnarray} \label{me}
 &&\sum_{u=a_i}^{b_i}(-1)^u  \xi_{r_i,[a_i,b_i]}^{\alpha'_u}=(-1)^{\sum\limits_{i=u+1}^l \lambda_i} \xi_{r_u,[a_u,b_u+1]}^{\alpha} \E_{r_u^{\lambda_u+1}} + (-1)^{\sum\limits_{i=u}^l \lambda_i}  \xi_{r_{u},[a_{u},b_{u}]}^{\alpha}  g_1^{\alpha_{a_{u-1} 1}} \cdots g_{r_{u}-1} ^{\alpha_{a_{u-1}, r_{u}-1}}  \\
&&\qquad \qquad \qquad \qquad \qquad - (-1)^{\sum\limits_{i=u+1}^l \lambda_i}  \xi_{r_u,[a_u+1,b_u+1]}^{\alpha}  g_1^{\alpha_{a_u 1}} \cdots g_{r_u}^{\alpha_{a_u, r_u}}. \notag
\end{eqnarray}
The verification is split into two cases. If $b_i-a_i$ is even, then the equality is immediate simply by noting that
\[a_u = {\sum_{i=u+1}^l \lambda_i}+1, \quad b_u = {\sum_{i=u}^l \lambda_i} . \] If $b_i-a_i$ is odd, noting that
\begin{eqnarray*} 
(-1)^{\sum\limits_{i=u+1}^l \lambda_i} \xi_{r_u,[a_u,b_u+1]}^{\alpha} \E_{r_u^{\lambda_u+1}} &=& (-1)^{\sum\limits_{i=u+1}^l \lambda_i} \xi_{r_u,[a_u+1,b_u+1]}^{\alpha} g_1^{\alpha_{a_u,1}} \cdots g_{r_u-1}^{\alpha_{a_u,r_u-1}}(g_{r_u})_{\alpha_{a_u,r_u}} (g_{r_u}-1)  \\
&=&  (-1)^{\sum\limits_{i=u+1}^l \lambda_i} \xi_{r_u,[a_u+1,b_u+1]}^{\alpha} g_1^{\alpha_{a_u,1}} \cdots g_{r_u-1}^{\alpha_{a_u,r_u-1}} (g_{r_u}^{\alpha_{a_u,r_u}} -1), 
\end{eqnarray*} then the equality \eqref{me} follows.

The proof is completed.
\end{proof}

\subsection{Normalized cocycles}
Denote
$$\eta_{r,[a,b]}^{\alpha}:=\left\{
\begin{array}{ll}[\frac{\alpha_{br}+\alpha_{b-1,r}}{m_{r}}]\cdots[\frac{\alpha_{a+1,r}+\alpha_{ar}}{m_{r}}],&\;\;\;b-a\text{ odd;}\\

[\frac{\alpha_{br}+\alpha_{b-1,r}}{m_{r}}]\cdots[\frac{\alpha_{a+2,r}+\alpha_{a+1,r}}{m_{r}}]\alpha_{ar},&\;\;\;b-a\text{ even.}
\end{array}\right.$$

\begin{corollary}
The following $k$-cochains $\o\in \Hom_{\mathbb{Z}G}(B_k,\k^*)$ given by
\begin{equation}\label{barcocycleabelian}
\o([g^{\alpha_1}, \dots, g^{\alpha_{k}}])=\prod_{l=1}^{k}\prod_{\begin{array}{ccc}1\leq r_{1}<\cdots<r_{l}\leq n\\\lambda_1+\cdots+\lambda_l=k,\lambda_1\emph{ odd}\\\lambda_i\ge1\emph{ for }1\le i\le l\end{array}}\zeta_{m_{r_1}}^{(-1)^{\sum\limits_{1\leq i<j\leq l}\lambda_{i}\lambda_{j}}\eta_{r_{1},[a_{1},b_{1}]}^{\alpha}\cdots\eta_{r_{l},[a_{l},b_{l}]}^{\alpha}a_{r_1^{\lambda_1}\cdots r_l^{\lambda_l}}}
\end{equation}
where $a_u = {\sum\limits_{i=u+1}^l \lambda_i}+1, \quad b_u = {\sum\limits_{i=u}^l \lambda_i}$ and $0\leq a_{r_1^{\lambda_1}\cdots r_l^{\lambda_l}}<m_{r_1}$ for $1\leq r_1<\cdots<r_l\leq n$
form a complete set of representatives of $k$-cocycles of the complex $(B_{\bullet}^*,\partial_{\bullet}^*)$.
\end{corollary}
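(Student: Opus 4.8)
The plan is to obtain the normalized cocycle formula \eqref{barcocycleabelian} by pulling back, along the chain map $F_\bullet$ constructed in Proposition \ref{chainmap}, the complete set of representatives of $k$-cocycles on the Koszul-like resolution given in Theorem \ref{complete}. The underlying principle is standard: a chain map $F_\bullet\colon (B_\bullet,\partial_\bullet)\to(K_\bullet,d_\bullet)$ between two projective resolutions of the trivial module $\mathbb Z$ is a homotopy equivalence, hence the induced map $F^*\colon\Hom_{\mathbb ZG}(K_\bullet,\k^*)\to\Hom_{\mathbb ZG}(B_\bullet,\k^*)$ is a quasi-isomorphism of cochain complexes. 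In particular $F^*$ sends cocycles to cocycles and induces an isomorphism $\H^k(K_\bullet^*,d_\bullet^*)\xrightarrow{\ \sim\ }\H^k(B_\bullet^*,\partial_\bullet^*)=\H^k(G,\k^*)$. So it suffices to apply $F^*$ to the representative cocycles of Theorem \ref{complete} and unravel the resulting expression.

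Concretely, I would first recall that a cocycle $f\in\Hom_{\mathbb ZG}(K_k,\k^*)$ in the set \eqref{ck} is determined by $f_{r_1^{\lambda_1}\cdots r_l^{\lambda_l}}=1$ when $\lambda_1$ is even and $f_{r_1^{\lambda_1}\cdots r_l^{\lambda_l}}=\zeta_{m_{r_1}}^{a_{r_1^{\lambda_1}\cdots r_l^{\lambda_l}}}$ when $\lambda_1$ is odd. Then $\o:=F_k^*(f)=f\circ F_k$, and I evaluate this on a basis element $[g^{\alpha_1},\dots,g^{\alpha_k}]$ using the explicit formula \eqref{Fkabelian} for $F_k$. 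Here one uses that $f$ is $\mathbb ZG$-linear, so it kills the group-element prefactors $g_1^{\cdots}\cdots g_{r-1}^{\cdots}$ and the terms $(g_r)_{*}$ appearing inside the $\xi_{r,[a,b]}^\alpha$; what survives of $\xi_{r,[a,b]}^\alpha$ under $f$ is exactly the integer coefficient, which is precisely $\eta_{r,[a,b]}^\alpha$ — this is the reason $\eta$ is defined by the same two-case formula as $\xi$ with the group-element factors and the $(g_r)_{\alpha_{ar}}$ stripped off (in the even case the Koszul generator $\Phi(\dots)$ is hit by $N_r$-type data, but $f$ of such a term with $\lambda_1$ even is $1$, and more importantly $(g_r)_{\alpha_{ar}}$ acts by the scalar value of $f$ which is trivial here, leaving the integer $\alpha_{ar}$). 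Since $f$ takes values in the group $\k^*$, a product of $\xi$'s maps to $f_{r_1^{\lambda_1}\cdots r_l^{\lambda_l}}$ raised to the product of the corresponding integer coefficients, i.e. to $\zeta_{m_{r_1}}^{\eta_{r_1,[a_1,b_1]}^\alpha\cdots\eta_{r_l,[a_l,b_l]}^\alpha\,a_{r_1^{\lambda_1}\cdots r_l^{\lambda_l}}}$ when $\lambda_1$ is odd, and to $1$ when $\lambda_1$ is even. Collecting the sign $(-1)^{\sum_{i<j}\lambda_i\lambda_j}$ from \eqref{Fkabelian} into the exponent and taking the product over all indexing data with $\lambda_1$ odd (the $\lambda_1$-even terms contributing $1$) yields exactly \eqref{barcocycleabelian}.

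It remains to argue that these $\o$'s form a \emph{complete} set of representatives. Since $F^*$ induces an isomorphism on cohomology and the $f$'s of Theorem \ref{complete} form a complete set of representatives of $\H^k(K_\bullet^*)$, the classes $[\o]=[F_k^*f]$ exhaust $\H^k(G,\k^*)$. For distinctness one notes that $F^*$ is injective on the level of these representatives: the assignment $f\mapsto(a_{r_1^{\lambda_1}\cdots r_l^{\lambda_l}})$ is a bijection between \eqref{ck} and the indicated product of cyclic groups, and distinct tuples give cohomologically distinct $f$'s hence cohomologically distinct $\o$'s; that the displayed $\o$ is literally a cocycle (not merely cohomologous to one) follows because $F_k^*$ carries $Z^k(K_\bullet^*)$ into $Z^k(B_\bullet^*)$, as $\partial^*F^*=F^*d^*$. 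I expect the only genuinely delicate point to be the careful bookkeeping in the second step — verifying that applying the $\mathbb ZG$-linear map $f$ to the messy product $\xi_{r_1,[a_1,b_1]}^\alpha\cdots\xi_{r_l,[a_l,b_l]}^\alpha\Phi_{r_1^{\lambda_1}\cdots r_l^{\lambda_l}}$ really does collapse to $\zeta_{m_{r_1}}^{\eta\cdots\eta\,a}$, in particular tracking the even/odd dichotomy of the leftmost block and confirming that the residual integer is $\eta_{r,[a,b]}^\alpha$ in both parities. All the serious homological content — exactness of $(K_\bullet,d_\bullet)$, chain-map property of $F_\bullet$, and the cocycle classification on $(K_\bullet^*,d_\bullet^*)$ — has already been established, so this corollary is essentially a translation exercise along $F^*$.
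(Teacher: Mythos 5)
Your proposal is correct and follows exactly the route the paper takes (the paper's proof is the single line ``It follows from the chain map \eqref{Fkabelian} and Theorem \ref{complete}''): pull back the Koszul-resolution representatives along $F_k$, use $\mathbb{Z}G$-linearity into the trivial module $\k^*$ to collapse each $\xi_{r,[a,b]}^\alpha$ to the integer exponent $\eta_{r,[a,b]}^\alpha$, discard the $\lambda_1$-even terms, and invoke the quasi-isomorphism property of a chain map between projective resolutions for completeness and distinctness. Your write-up simply makes explicit the bookkeeping that the paper leaves implicit.
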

\begin{proof}
It follows from the chain map \eqref{Fkabelian} and Theorem \ref{complete}. 
\end{proof}

\subsection{A chain map  from $(K_{\bullet},d_{\bullet})$ to $(B_{\bullet},\partial_{\bullet})$}
For completeness, we also include a chain map from the Koszul-like resolution $(K_{\bullet},d_{\bullet})$ to the normalized bar resolution $(B_{\bullet},\partial_{\bullet}).$ This chain map is very useful for comparing cohomology classes of normalized cocycles and for studying the whole cohomology ring structure, etc. 

Denote an ordered set of $\lambda$ elements as 
$$\Lambda_{r^{\lambda}}:=\left\{\begin{array}{ll}(N_r,g_r,N_r,g_r,\dots,N_r,g_r),&\lambda\text{ even;}\\(g_r,N_r,g_r,N_r,g_r\dots,N_r,g_r),&\lambda\text{ odd.}\end{array}\right.$$
Given a set of positive integers $\lambda_1, \ \lambda_2, \dots, \lambda_l$ with $\lambda_1+\cdots+\lambda_l=k$,
let $\Shuffle(\lambda_1,\dots,\lambda_l)$ be the subset of the permutation group $S_k$ such that the elements of it preserve the order of elements of each block of the partition $(\lambda_1,\dots,\lambda_l)$. For each $k,$ define a map 
\begin{eqnarray*}
 G_{k} \colon K_{k} &\To& B_{k}  \\
\Phi_{r_1^{\lambda_1}\cdots r_l^{\lambda_l}} &\mapsto& \sum_{\sigma\in \Shuffle(\lambda_1,\dots,\lambda_l)}(-1)^{\sigma}[\sigma(\Lambda_{r_1^{\lambda_1}},\dots,\Lambda_{r_l^{\lambda_l}})].
\end{eqnarray*}

\begin{proposition} 
We have the following commutative diagram
\begin{figure}[hbt]
\begin{picture}(150,50)(50,-40)
\put(0,0){\makebox(0,0){$ \cdots$}}\put(10,0){\vector(1,0){20}}\put(40,0){\makebox(0,0){$K_{3}$}}
\put(50,0){\vector(1,0){20}}\put(80,0){\makebox(0,0){$K_{2}$}}
\put(90,0){\vector(1,0){20}}\put(120,0){\makebox(0,0){$K_{1}$}}
\put(130,0){\vector(1,0){20}}\put(160,0){\makebox(0,0){$K_{0}$}}
\put(170,0){\vector(1,0){20}}\put(200,0){\makebox(0,0){$\mathbb{Z}$}}
\put(210,0){\vector(1,0){20}}\put(240,0){\makebox(0,0){$0$}}

\put(0,-40){\makebox(0,0){$ \cdots$}}\put(10,-40){\vector(1,0){20}}\put(40,-40){\makebox(0,0){$B_{3}$}}
\put(50,-40){\vector(1,0){20}}\put(80,-40){\makebox(0,0){$B_{2}$}}
\put(90,-40){\vector(1,0){20}}\put(120,-40){\makebox(0,0){$B_{1}$}}
\put(130,-40){\vector(1,0){20}}\put(160,-40){\makebox(0,0){$B_{0}$}}
\put(170,-40){\vector(1,0){20}}\put(200,-40){\makebox(0,0){$\mathbb{Z}$}}
\put(210,-40){\vector(1,0){20}}\put(240,-40){\makebox(0,0){$0$}}

\put(40,-10){\vector(0,-1){20}}
\put(80,-10){\vector(0,-1){20}}
\put(120,-10){\vector(0,-1){20}}
\put(158,-10){\line(0,-1){20}}\put(160,-10){\line(0,-1){20}}
\put(198,-10){\line(0,-1){20}}\put(200,-10){\line(0,-1){20}}

\put(60,5){\makebox(0,0){$d$}}
\put(100,5){\makebox(0,0){$d$}}
\put(140,5){\makebox(0,0){$d$}}

\put(60,-35){\makebox(0,0){$\partial_3$}}
\put(100,-35){\makebox(0,0){$\partial_2$}}
\put(140,-35){\makebox(0,0){$\partial_1$}}

\put(50,-20){\makebox(0,0){$G_{3}$}}
\put(90,-20){\makebox(0,0){$G_{2}$}}
\put(130,-20){\makebox(0,0){$G_{1}$}}

\end{picture}               
\end{figure}
\end{proposition}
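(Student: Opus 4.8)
The plan is to recognize $G_\bullet$ as assembled from two standard ingredients: the classical comparison map from the periodic resolution of a cyclic group to its bar resolution, and the Eilenberg--Zilber shuffle map for a finite product. First observe that the Koszul-like resolution decomposes as a tensor product of complexes of $\mathbb{Z}G$-modules,
$$K_\bullet \;=\; K^{(1)}_\bullet\otimes_{\mathbb{Z}}\cdots\otimes_{\mathbb{Z}}K^{(n)}_\bullet,$$
where $K^{(i)}_\bullet$ is the periodic resolution \eqref{resolution} of the trivial $\mathbb{Z}\mathbb{Z}_{m_i}$-module $\mathbb{Z}$ (write $e^{(i)}_\lambda$ for its degree-$\lambda$ free generator); under this identification $\Phi_{r_1^{\lambda_1}\cdots r_l^{\lambda_l}}$ corresponds to $e^{(1)}_0\otimes\cdots\otimes e^{(r_1)}_{\lambda_1}\otimes\cdots\otimes e^{(r_l)}_{\lambda_l}\otimes\cdots\otimes e^{(n)}_0$, and the differential $d$ is exactly the tensor-product differential, the signs $(-1)^{\sum_{l<i}a_l}$ in the definition of $d$ being precisely the Koszul signs.

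Next I would introduce, for each $1\le i\le n$, the map $\phi^{(i)}\colon K^{(i)}_\bullet\to B_\bullet(\mathbb{Z}_{m_i})$ given by $e^{(i)}_\lambda\mapsto[\Lambda_{i^{\lambda}}]$, where $[\,\cdot\,]$ is extended $\mathbb{Z}$-multilinearly in its entries (so $[N_i\,|\,g_i\,|\,\cdots]=\sum_{j}[g_i^{\,j}\,|\,g_i\,|\,\cdots]$, etc.). That $\phi^{(i)}$ is a chain map lifting $\id_{\mathbb{Z}}$ is the classical minimal-to-bar comparison for a cyclic group; it is a two-line check using the normalization $[\cdots|1|\cdots]=0$ together with the identities $g_iN_i=N_ig_i=N_i$ and $g_i^{m_i}=1$ in $\mathbb{Z}_{m_i}$ — precisely as one verifies $\partial_2[N_i|g_i]=N_i[g_i]$, $\partial_3[g_i|N_i|g_i]=T_i[N_i|g_i]$, $\partial_4[N_i|g_i|N_i|g_i]=N_i[g_i|N_i|g_i]$, and so on. Then I would invoke the Eilenberg--Zilber theorem in the form appropriate to the bar construction of a product (see Weibel): the shuffle map
$$\nabla\colon B_\bullet(\mathbb{Z}_{m_1})\otimes_{\mathbb{Z}}\cdots\otimes_{\mathbb{Z}}B_\bullet(\mathbb{Z}_{m_n})\To B_\bullet(G),\qquad [\mathbf{a}^{(1)}]\otimes\cdots\otimes[\mathbf{a}^{(n)}]\;\mapsto\;\sum_{\sigma}(-1)^{\sigma}\bigl[\sigma(\mathbf{a}^{(1)},\dots,\mathbf{a}^{(n)})\bigr],$$
the sum over all permutations $\sigma$ of the concatenated tuple preserving the internal order of each block, is a chain map lifting $\id_{\mathbb{Z}}$. (If one wishes to avoid citing Eilenberg--Zilber, this is itself a short cancellation argument: the faces of $\partial\nabla$ that multiply an entry belonging to one factor by an entry belonging to another cancel in pairs, since such entries commute in $G$ while a transposition flips the shuffle sign.) Composing, $\nabla\circ(\phi^{(1)}\otimes\cdots\otimes\phi^{(n)})$ is a chain map $K_\bullet\to B_\bullet(G)$ lifting $\id_{\mathbb{Z}}$, and the last task is to identify it with $G_\bullet$: the tensor factors whose index is not among $r_1<\cdots<r_l$ sit in degree $0$ and are sent by $\phi^{(i)}$ to $[\,]$, so they disappear under $\nabla$; and the iterated shuffle product of the $l$ blocks $\Lambda_{r_1^{\lambda_1}},\dots,\Lambda_{r_l^{\lambda_l}}$ of lengths $\lambda_1,\dots,\lambda_l$, carrying the Koszul sign of a permutation of $k$ letters each of homological degree $1$ — that is, the sign $(-1)^{\sigma}$ — is precisely $\sum_{\sigma\in\Shuffle(\lambda_1,\dots,\lambda_l)}(-1)^{\sigma}[\sigma(\Lambda_{r_1^{\lambda_1}},\dots,\Lambda_{r_l^{\lambda_l}})]$. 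Since composites of chain maps are chain maps and $G_0$ agrees with the two augmentations, the diagram commutes.

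The step I expect to cost the most care is this last identification: making precise that the tensor order $K^{(1)}_\bullet\otimes\cdots\otimes K^{(n)}_\bullet$ together with the Koszul sign rule reproduces exactly the index order $r_1<\cdots<r_l$ and the sign $(-1)^{\sigma}$ built into the definition of $G_k$, with the degree-$0$ padding factors absorbed correctly. Should one prefer a fully self-contained argument, the content of Eilenberg--Zilber and of the cyclic-group check can instead be merged into a single direct computation comparing $\partial_k G_k\Phi_{r_1^{\lambda_1}\cdots r_l^{\lambda_l}}$ with $G_{k-1}d_k\Phi_{r_1^{\lambda_1}\cdots r_l^{\lambda_l}}$ term by term — using that two adjacent entries inside one block $\Lambda_{r^{\lambda}}$ multiply so as to collapse $\lambda$ to $\lambda-1$ (matching $\E_{r^{\lambda}}$), that faces straddling two different blocks cancel by the commutativity/sign mechanism above, and that the outermost faces produce the leading action term $g^{\alpha_1}\cdot(-)$ and the trailing ``delete last entry'' term — at the price of a longer and considerably less transparent calculation.
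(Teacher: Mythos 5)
Your proposal is correct, and it takes a genuinely different route from the paper: the paper's own proof is a direct term-by-term verification that $\partial_k G_k = G_{k-1}d_k$, carried out in the same style as the proof of Proposition \ref{chainmap} (and in fact omitted there for brevity), whereas you factor $G_\bullet$ as the composite of the classical minimal-to-bar comparison maps $\phi^{(i)}$ for the cyclic factors with the Eilenberg--Zilber shuffle map $\nabla$ for the product. All the ingredients check out: $K_\bullet$ is by construction the tensor product of the periodic resolutions with the Koszul sign convention, so $d$ is the tensor-product differential; each $\phi^{(i)}$ is the standard comparison (e.g.\ $\partial_2[N_i|g_i]=N_i[g_i]$ and $\partial_3[g_i|N_i|g_i]=T_i[N_i|g_i]$ in the normalized complex, using $[1]=0$ and $g_i^{m_i}=1$); the shuffle map is a chain map because faces multiplying entries from distinct factors cancel in sign-reversed pairs thanks to commutativity of distinct direct factors; and the degree-$0$ factors contribute the empty symbol $[\;]$, which is the unit for the shuffle product, so the composite on $\Phi_{r_1^{\lambda_1}\cdots r_l^{\lambda_l}}$ is exactly $\sum_{\sigma\in\Shuffle(\lambda_1,\dots,\lambda_l)}(-1)^{\sigma}[\sigma(\Lambda_{r_1^{\lambda_1}},\dots,\Lambda_{r_l^{\lambda_l}})]$, i.e.\ $G_k$ (a low-degree sanity check: $\partial_2([g_s,g_t]-[g_t,g_s])=T_s[g_t]-T_t[g_s]=G_1(d\Phi_{st})$). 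What your approach buys is conceptual transparency and reusability --- it explains where the shuffle formula comes from and reduces the verification to two standard facts --- at the cost of invoking (or reproving) Eilenberg--Zilber and of the bookkeeping you rightly flag, namely matching the Koszul/tensor-order signs with the $(-1)^{\sigma}$ in the definition of $G_k$; the paper's direct computation is self-contained but long and opaque.
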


\begin{proof}
By direct verification similarly as the proof of Proposition \ref{chainmap}. The detail is omitted.
\end{proof}

\subsection{A translation to quantum field theory} \label{subspt}
Now we follow the notations in \cite{spt} and translate our result into quantum field theory language. Let $G=\mathbb{Z}_{N_1}\times\cdots\times\mathbb{Z}_{N_n}$ where $N_i|N_{i+1}$ for $1\le i\le n-1$.
Let $k=d+1$ be the spacetime dimension. For $1\le l\le d+1$, $1\le r_1<\cdots<r_l\le n$, $\lambda_i\ge1$ for $1\le i\le l$,
define $$\phi_{{r_i}^{\lambda_i}}=\left\{\begin{array}{ll}A_{r_i}dA_{r_i}\cdots dA_{r_i}, \quad &\text{ if }\lambda_i\text{ odd;}\\ 
dA_{r_i}\cdots dA_{r_i}, \quad &\text{ if }\lambda_i\text{ even.}\end{array}\right.$$
We generalize the correspondence between the partition functions of fields and cocycles given in \cite{spt}.

The generalized correspondence connects the part $$\zeta_{N_{r_1}}^{(-1)^{\sum\limits_{1\leq i<j\leq l}\lambda_{i}\lambda_{j}}\eta_{r_{1},[a_{1},b_{1}]}^{\alpha}\cdots\eta_{r_{l},[a_{l},b_{l}]}^{\alpha}a_{r_1^{\lambda_1}\cdots r_l^{\lambda_l}}}$$ 
of $(d+1)$-cocycle $\o_{d+1}$
and the partition function $$\zeta_{N_{r_1}}^{(-1)^{\sum\limits_{1\leq i<j\leq l}\lambda_{i}\lambda_{j}}a_{r_1^{\lambda_1}\cdots r_l^{\lambda_l}}\frac{N_{r_1}^{\lambda_1-2[\frac{\lambda_1}{2}]}\cdots N_{r_l}^{\lambda_l-2[\frac{\lambda_l}{2}]}}{(2\pi)^{[\frac{\lambda_1+1}{2}]+\cdots+[\frac{\lambda_l+1}{2}]}}\int\phi_{r_l^{\lambda_l}}\cdots\phi_{r_1^{\lambda_1}}}$$ 
where the corresponding terms of $A_u$ and $dA_u$ are given in \cite{spt} and the order of $A_u$ and $dA_u$ is so arranged that their positions indicate which component of $\alpha$ they correspond to. Note that $\alpha=(\alpha_1,\dots,\alpha_{d+1})$, $\lambda_1+\cdots+\lambda_l=d+1$, and $\lambda_1$  is odd. Our result reveals the fact that we don't need higher form fields $B,C$, etc, to get a complete set of representatives of cocycles.

Now we explain how we get these partition functions.
First, any 1-form field is the linear combination of the wedge products of some $A_u$ and $dA_u$ where each $A_u$ appears at most once, i.e. the linear combination of $\phi_{r_l^{\lambda_l}}\cdots\phi_{r_1^{\lambda_1}}$ for some $1\le l\le d+1$, $1\le r_1<\cdots<r_l\le n$, $\lambda_i\ge1$ for $1\le i\le l$.
After integration by part on $\int\phi_{r_l^{\lambda_l}}\cdots\phi_{r_1^{\lambda_1}}$, we need only consider those terms with $\lambda_1$ odd.

Due to a discrete $\mathbb{Z}_N$ gauge symmetry, and the gauge transformation must be identified by $2\pi$, we have the following general rules:
$$\oint A_u=\frac{2\pi n_u}{N_u}\mod 2\pi, \quad \quad \quad \oint\delta A_u=0\mod 2\pi.$$

We consider a spacetime with a volume size $L^{d+1}$ where $L$ is the length of one dimension, for example $T^{d+1}$ torus.
The allowed large gauge transformation implies that locally $A$ can be:
$$A_{u,\mu}=\frac{2\pi n_udx_{\mu}}{N_uL}, \quad \,\,\,\delta A_u=\frac{2\pi m_udx_{\mu}}{L}.$$
Now we consider the partition function $\exp(ik_{r_1^{\lambda_1}\cdots r_l^{\lambda_l}}\int\phi_{r_l^{\lambda_l}}\cdots\phi_{r_1^{\lambda_1}})$ with $\lambda_1$ odd.
Note that $\delta(dA_u)=0.$ Thus for the large gauge transformation, we have $k_{r_1^{\lambda_1}\cdots r_l^{\lambda_l}}\int\delta(\phi_{r_l^{\lambda_l}}\cdots\phi_{r_1^{\lambda_1}})=0\mod 2\pi$.
This implies $$k_{r_1^{\lambda_1}\cdots r_l^{\lambda_l}}=p_{r_1^{\lambda_1}\cdots r_l^{\lambda_l}}\frac{N_{r_2}^{\lambda_2-2[\frac{\lambda_2}{2}]}\cdots N_{r_l}^{\lambda_l-2[\frac{\lambda_l}{2}]}}{(2\pi)^{[\frac{\lambda_1+1}{2}]+\cdots+[\frac{\lambda_l+1}{2}]-1}}$$
where $p_{r_1^{\lambda_1}\cdots r_l^{\lambda_l}}\in\mathbb{Z}$.

For the flux identification, we have $$k_{r_1^{\lambda_1}\cdots r_l^{\lambda_l}}\int\phi_{r_l^{\lambda_l}}\cdots\phi_{r_1^{\lambda_1}}=\frac{(2\pi)^{[\frac{\lambda_1+1}{2}]+\cdots+[\frac{\lambda_l+1}{2}]}n_{r_1}^{[\frac{\lambda_1+1}{2}]}\cdots n_{r_l}^{[\frac{\lambda_l+1}{2}]}}{N_{r_1}^{\lambda_1-2[\frac{\lambda_1}{2}]}\cdots N_{r_l}^{\lambda_l-2[\frac{\lambda_l}{2}]}}.$$
Hence $$(2\pi)^{[\frac{\lambda_1+1}{2}]+\cdots+[\frac{\lambda_l+1}{2}]-1}k_{r_1^{\lambda_1}\cdots r_l^{\lambda_l}}\simeq(2\pi)^{[\frac{\lambda_1+1}{2}]+\cdots+[\frac{\lambda_l+1}{2}]-1}k_{r_1^{\lambda_1}\cdots r_l^{\lambda_l}}+N_{r_1}^{\lambda_1-2[\frac{\lambda_1}{2}]}\cdots N_{r_l}^{\lambda_l-2[\frac{\lambda_l}{2}]}.$$
Here $\simeq$ means the level identification.
Therefore, the cyclic period of $p_{r_1^{\lambda_1}\cdots r_l^{\lambda_l}}$ is $N_{r_1}$.

Finally let $(-1)^{\sum\limits_{1\leq i<j\leq l}\lambda_{i}\lambda_{j}}p_{r_1^{\lambda_1}\cdots r_l^{\lambda_l}}=a_{r_1^{\lambda_1}\cdots r_l^{\lambda_l}}$.
Then we get the partition functions in correspondence with cocycles.

\section{On Braided linear Gr-categories}
The monoidal category of finite dimensional vector spaces graded by a group $G,$ with the usual tensor product and associativity constraint given by a 3-cocycle $\o$ is denoted by $\Vec^\o_G.$ Such a monoidal category is called a linear Gr-category. The terminology goes back to  Ho\`{a}ng Xu\^{a}n S\'{i}nh \cite{grc}, a student of Grothendieck. The aim of this section is give a complete description to all braided linear Gr-categories with a help of the explicit formulas of normalized $3$-cocycles. This extends the related partial results obtained in \cite{bct, hly, js, ms} to full generality. 

\subsection{Monoidal structures}
Recall that the category $\Vec_G$ of finite-dimensional $G$-graded vector spaces has simple objects $\{S_g|g \in G\}$ where $(S_g)_h=\delta_{g,h}\k, \ \forall h \in G.$ The tensor product is given by $S_g \otimes S_h=S_{gh},$ and $S_1$ ($1$ is the identity of $G$) is the unit object. Without loss of generality we may assume that the left and right unit constraints are identities. If $a$ is an associativity constraint on $\Vec_G,$ then it is given by $a_{S_f,S_g,S_h}=\o(f,g,h)\id,$ where $\o:G \times G \times G \rightarrow \k^*$ is a function. The pentagon axiom and the triangle axiom give
\begin{gather*}
\o(ef,g,h)\o(e,f,gh)=\o(e,f,g)\o(e,fg,h)\o(f,g,h), \\ \o(f,1,g)=1,
\end{gather*}
which say exactly that $\o$ is a normalized 3-cocycle on $G.$ Note that cohomologous cocycles define equivalent monoidal structures, therefore the classification of monoidal structures on $\Vec_G$ is equivalent to determining a complete set of representatives of normalized 3-cocycles on $G.$

\subsection{Normalized 3-cocycles} 
In the special case $k=3$, if we abbreviate $a_{r^3}$ by $a_r$, $a_{rs^2}$ by $a_{rs}$, then \eqref{barcocycleabelian} becomes
\begin{eqnarray}\label{3cocycle}
&& \o:\;B_{3}\To \k^{\ast}  \\ \notag
&&[g_{1}^{i_{1}}\cdots g_{n}^{i_{n}},g_{1}^{j_{1}}\cdots g_{n}^{j_{n}},g_{1}^{k_{1}}\cdots g_{n}^{k_{n}}] \mapsto \prod_{r=1}^{n}\zeta_{m_r}^{a_{r}i_{r}[\frac{j_{r}+k_{r}}{m_{r}}]}
\prod_{1\leq r<s\leq n}\zeta_{m_r}^{a_{rs}k_{r}[\frac{i_{s}+j_{s}}{m_{s}}]}
\prod_{1\leq r<s<t\leq n}\zeta_{m_r}^{-a_{rst}k_{r}j_{s}i_{t}} 
\end{eqnarray}
where $0\le a_r,a_{rs},a_{rst}<m_r$.

\begin{remark}
The present complete set of representatives of normalized $3$-cocycles is slightly different from that in \cite{bgrc1, hly}. Of course they are equivalent up to cohomology.
\end{remark}

\subsection{Braided structures} Now we consider the braided structures on a linear Gr-category $\Vec_G^\o.$ Recall that a braiding in $\Vec_G^\o$ is a commutativity constraint $c: \otimes \rightarrow \otimes^{\op}$ satisfying the hexagon axiom. Note that $c$ is given by $c_{S_x,S_y}=\R(x,y)\id,$ where $\R: G \times G \rightarrow \k^*$ is a function, and the hexagon axiom of $c$ says that
\begin{equation}\label{EM3cocycle}
\frac{\R(xy,z)}{\R(x,z)\R(y,z)}\frac{\o(x,z,y)}{\o(x,y,z)\o(z,x,y)}=1=\frac{\R(x,yz)}{\R(x,y)\R(x,z)}\frac{\o(x,y,z)\o(y,z,x)}{\o(y,x,z)}
\end{equation}
for all $x,y,z\in G$.

In other words, $\R$ is a quasi-bicharacter of $G$ with respect to $\o.$ Therefore, the classification of braidings in $\Vec_G^\o$ is equivalent to determining all the quasi-bicharacters of $G$ with respect to $\o.$ It is interesting to remark that the braided monoidal structures $(\o,\R)$ on $\Vec_G$ appeared already in the 1950s in terms of the so-called abelian cohomology of Eilenberg and Mac Lane \cite{EMa, EMb}.

\subsection{Quasi-bicharacters}  Clearly, any quasi-bicharacter $\R$ is uniquely determined by the following values:
$$r_{ij}:=\R(g_{i},g_{j}),\;\;\;\;\;\;\;\;\textrm{for all}\;\;1\leq i, \ j\leq n.$$

\begin{proposition} \label{condition}
Let $r_{ij}\in \k^{\ast}$ for
$1\leq i, \ j\leq n$. Then there is a quasi-bicharacter $\R$ with respect to $\o$ satisfying $\R(g_{i},g_{j})=r_{ij}$
if and only if the following equations are satisfied:
\begin{eqnarray*}
r_{ii}^{m_{i}}=\zeta_{m_{i}}^{a_{i}}=\zeta_{m_{i}}^{-a_{i}},&&\emph{for}\;\;1\leq i\leq n,\\
r_{ij}^{m_{i}}=r_{ji}^{m_i}=1,\ \ a_{ij}=0,&&\emph{for}\;\;1\le i<j\le n,\\
a_{rst}=0,&&\emph{for}\;\;1\leq r<s<t\leq n.
\end{eqnarray*}
\end{proposition}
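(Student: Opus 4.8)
The plan is to prove both directions by pushing the explicit $3$-cocycle \eqref{3cocycle} through the hexagon axioms \eqref{EM3cocycle} directly, exploiting that \eqref{EM3cocycle} lets one propagate the value of $\R$ along powers of a single generator and then close up using $g_i^{m_i}=1$ together with the normalization $\R(1,-)=\R(-,1)=1$; the latter drops out of \eqref{EM3cocycle} by setting two arguments equal to $1$ and using that $\o$ is normalized.

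\emph{Necessity.} I would rewrite the first hexagon as $\R(xy,z)=\R(x,z)\R(y,z)\,\o(x,y,z)\o(z,x,y)\o(x,z,y)^{-1}$ and the second analogously. Iterating the first along $x=g_i^a$ with $y=z=g_i$, and reading off from \eqref{3cocycle} that $\o(g_i,g_i^a,g_i)=1$ for $0\le a\le m_i-2$ while $\o(g_i,g_i^{m_i-1},g_i)=\zeta_{m_i}^{a_i}$, one gets $\R(g_i^a,g_i)=r_{ii}^a$ for $0\le a<m_i$ and then, at the step to $a=m_i$ where $g_i^{m_i}=1$, the relation $r_{ii}^{m_i}=\zeta_{m_i}^{-a_i}$; the mirror iteration of the second hexagon in the right slot gives $r_{ii}^{m_i}=\zeta_{m_i}^{a_i}$, whence $r_{ii}^{m_i}=\zeta_{m_i}^{a_i}=\zeta_{m_i}^{-a_i}$. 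For distinct generators $g_p,g_q$ with $p<q$, the $\o$-factors occurring in the iterations of $\R(g_p^a,g_q)$ (first hexagon, left slot) and of $\R(g_q,g_p^a)$ (second hexagon, right slot) are all $1$ — one checks from \eqref{3cocycle} that for these particular triples of arguments none of the three product factors can be nonzero — so closing at $a=m_p$ yields $r_{pq}^{m_p}=r_{qp}^{m_p}=1$. Iterating instead $\R(g_q^{a+1},g_p)=\R(g_q^a,g_p)\,r_{qp}\,\o(g_q^a,g_q,g_p)\o(g_p,g_q^a,g_q)\o(g_q^a,g_p,g_q)^{-1}$, the second product of \eqref{3cocycle} now contributes exactly the factor $\zeta_{m_p}^{a_{pq}[\frac{a+1}{m_q}]}$, so that at $a=m_q$ (using $r_{qp}^{m_q}=1$, which follows from $r_{qp}^{m_p}=1$ and $m_p\mid m_q$) one is forced to $\zeta_{m_p}^{a_{pq}}=1$, i.e. $a_{pq}=0$. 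Finally, computing $\R(g_sg_t,g_r)$ in the two ways allowed by the first hexagon (grouping $g_s\cdot g_t$ versus $g_t\cdot g_s$, legitimate since $G$ is abelian) and comparing, the third product of \eqref{3cocycle} contributes $\zeta_{m_r}^{-a_{rst}}$ to one side only, forcing $a_{rst}=0$ for $r<s<t$.

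\emph{Sufficiency.} Conversely, once $a_{ij}=0$ for $i<j$ and $a_{rst}=0$ for $r<s<t$, formula \eqref{3cocycle} collapses to $\o([g^\alpha,g^\beta,g^\gamma])=\prod_{r=1}^n\zeta_{m_r}^{a_r\alpha_r[\frac{\beta_r+\gamma_r}{m_r}]}$. I would then simply define $\R(g^\alpha,g^\beta):=\prod_{i,j=1}^n r_{ij}^{\alpha_i\beta_j}$ on the canonical representatives $0\le\alpha_i<m_i$; this is patently a well-defined function $G\times G\to\k^*$ with $\R(g_i,g_j)=r_{ij}$, and it remains only to verify \eqref{EM3cocycle}. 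The ``wrap-around'' factor produced when the group law is applied inside $\R$ is a product of powers of the $r_{ij}^{m_i}$ (resp. $r_{ij}^{m_j}$); using $r_{ij}^{m_i}=1$ for $i\ne j$ (for $i>j$ this is $r_{ij}^{m_j}=1$ together with $m_j\mid m_i$) and $r_{ii}^{m_i}=\zeta_{m_i}^{a_i}$, a short manipulation organized by the identity \eqref{f} shows that the first hexagon in \eqref{EM3cocycle} reduces to $\prod_i\zeta_{m_i}^{-2a_i\gamma_i[\frac{\alpha_i+\beta_i}{m_i}]}=1$, which holds because $\zeta_{m_i}^{2a_i}=1$, while the second hexagon reduces to a tautology.

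\emph{Main obstacle.} The only genuine work lies in the necessity part: for each specific triple of arguments that arises in the iterations one must decide which of the three product factors of \eqref{3cocycle} can be nonzero and keep careful track of the integer-part terms $[\frac{\cdot}{m}]$ and of the reductions modulo the $m_i$. These computations are entirely elementary but must be carried out with some care, and the identity \eqref{f} is the tool that organizes the resulting cancellations.
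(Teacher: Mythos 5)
Your proof is correct and follows essentially the same route as the paper: necessity by evaluating the two hexagon identities \eqref{EM3cocycle} on generators, iterating along powers, and closing up at $g_i^{m_i}=1$ to extract in turn the constraints on $r_{ii}$, $r_{ij}$, $a_{ij}$ and $a_{rst}$; sufficiency by writing down an explicit candidate $\R$ and observing that the first hexagon reduces to $\zeta_{m_i}^{2a_i}=1$ while the second is a tautology. The only real difference is in the sufficiency step, where you take $\R(g^\alpha,g^\beta)=\prod_{i,j}r_{ij}^{\alpha_i\beta_j}$ with the off-diagonal factors included (killed in the wrap-around terms by $r_{ij}^{m_{\min(i,j)}}=1$ and $m_i\mid m_{i+1}$) and verify both hexagons, whereas the paper's written construction uses only the diagonal product $\prod_s r_{ss}^{i_sj_s}$ — which does not interpolate the prescribed off-diagonal values $r_{ij}$ — and checks only the first hexagon; your version is the more complete one.
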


\begin{proof}  ``$\Rightarrow$".   
For the case $r<s<t,$ consider $\R(g_{t}g_{s},g_{r})$ and $\R(g_{s}g_{t},g_{r})$ which obviously are equal. By \eqref{EM3cocycle}, we have
\begin{eqnarray*}\R(g_{t}g_{s}, g_{r})&=&\R(g_{t} , g_{r})\R(g_{s}, g_{r})\frac{\o(g_{r},g_{t},g_{s})\o(g_{t},g_{s},g_{r})}{\o(g_{t},g_{r},g_{s})}\\
&=&\R(g_{t} , g_{r})\R(g_{s}, g_{r})\zeta_{m_r}^{-a_{rst}},\end{eqnarray*}
\begin{eqnarray*}\R(g_{s}g_{t}, g_{r})&=&\R(g_{s} , g_{r})\R(g_{t}, g_{r})\frac{\o(g_{r},g_{s},g_{t})\o(g_{s},g_{t},g_{r})}{\o(g_{s},g_{r},g_{t})}\\
&=&\R(g_{s} , g_{r})\R(g_{t}, g_{r}).\end{eqnarray*}
Therefore, $\zeta_{m_r}^{-a_{rst}}=1$. Since $0\le a_{rst}<m_r$, we arrive at $a_{rst}=0.$

For any $1\leq i\leq n$, applying \eqref{EM3cocycle} iteratively, we have $\R(g_{i},g_{i}^{s})=\R(g_{i},g_{i})^{s}$ and $\R(g_{i}^{s},g_{i})=\R(g_{i},g_{i})^{s}$
for $1\leq s\leq m_{i}-1$. Then
$$1=\R(g_{i},g_{i}^{m_{i}})=\R(g_{i},g_{i})\R(g_{i},g_{i}^{m_{i}-1})
\frac{\o(g_{i},g_{i},g_{i}^{m_{i}-1})}{\o(g_i,g_i,g_i^{m_i-1})\o(g_i,g_i^{m_i-1},g_i)}=\R(g_{i},g_{i})^{m_{i}}\zeta_{m_{i}}^{-a_{i}},$$
$$1=\R(g_{i}^{m_{i}},g_{i})=\R(g_{i}^{m_{i}-1},g_{i})\R(g_{i},g_{i})\frac{\o(g_i^{m_i-1},g_i,g_i)\o(g_{i},g_{i}^{m_{i}-1},g_{i})}{\o(g_i^{m_i-1},g_i,g_i)}=\R(g_{i},g_{i})^{m_{i}}{\zeta_{m_{i}}^{a_{i}}}.$$
Thus $r_{ii}^{m_{i}}=\zeta_{m_{i}}^{a_{i}}=\zeta_{m_{i}}^{-a_{i}}$.

Assume $i<j.$ Applying \eqref{EM3cocycle} iteratively, one has $\R(g_{i}^{k},g_{j})=\R(g_{i},g_{j})^{k}$ for $1\leq k\leq m_{i}-1.$ Therefore,
$$1=\R(g_{i}^{m_{i}},g_{j})=\R(g_{i}^{m_{i}-1},g_{j})\R(g_{i},g_{j})\frac{\o(g_i^{m_i-1},g_i,g_j)\o(g_j,g_i^{m_i-1},g_i)}{\o(g_i^{m_i-1},g_j,g_i)}
=\R(g_{i},g_{j})^{m_{i}}.$$
This implies that $r_{ij}^{m_{i}}=1$. Applying \eqref{EM3cocycle} iteratively, one has $\R(g_{i},g_{j}^{k})=\R(g_{i},g_{j})^{k}$ for $1\leq k\leq m_{j}-1$. Therefore,
$$1=\R(g_{i},g_{j}^{m_{j}})=\R(g_{i},g_{j})\R(g_i,g_j^{m_j-1})\frac{\o(g_j,g_i,g_j^{m_j-1})}{\o(g_i,g_j,g_j^{m_j-1})\o(g_j,g_j^{m_j-1},g_i)}=r_{ij}^{m_{j}}\zeta_{m_i}^{-a_{ij}}.$$ This implies that $r_{ij}^{m_{j}}=\zeta_{m_{i}}^{a_{ij}}$. 
Since $m_i|m_j$, we have $\zeta_{m_i}^{a_{ij}}=1$. Since $0\le a_{ij}<m_i$, we arrive at $a_{ij}=0$.

Assume $i>j.$ Applying \eqref{EM3cocycle} iteratively, one has $\R(g_{i}^{k},g_{j})=\R(g_{i},g_{j})^{k}$ for $1\leq k\leq m_{i}-1.$ Therefore,
$$1=\R(g_{i}^{m_{i}},g_{j})=\R(g_{i}^{m_{i}-1},g_{j})\R(g_{i},g_{j})\frac{\o(g_i^{m_i-1},g_i,g_j)\o(g_j,g_i^{m_i-1},g_i)}{\o(g_i^{m_i-1},g_j,g_i)}
=\R(g_{i},g_{j})^{m_{i}}\zeta_{m_j}^{a_{ij}}.$$
This implies that $r_{ij}^{m_{i}}=\zeta_{m_j}^{-a_{ij}}=1$. Applying \eqref{EM3cocycle} iteratively, one has $\R(g_{i},g_{j}^{k})=\R(g_{i},g_{j})^{k}$ for $1\leq k\leq m_{j}-1$. Therefore,
$$1=\R(g_{i},g_{j}^{m_{j}})=\R(g_{i},g_{j})\R(g_i,g_j^{m_j-1})\frac{\o(g_j,g_i,g_j^{m_j-1})}{\o(g_i,g_j,g_j^{m_j-1})\o(g_j,g_j^{m_j-1},g_i)}=r_{ij}^{m_{j}}.$$ This implies that $r_{ij}^{m_{j}}=1$.

The necessity is proved.

``$\Leftarrow$". Conversely, define a  map $\R: G\times G \To k^{\ast}$  by setting $$\R(g_{1}^{i_{1}}\cdots g_{n}^{i_{n}},g_{1}^{j_{1}}\cdots g_{n}^{j_{n}}):=\prod_{s=1}^n r_{ss}^{i_{s}j_{s}}.$$ We verify that $\R$ is a quasi-bicharacter of $G$ with respect to $\o.$

Let $x=g_{1}^{i_{1}}\cdots g_{n}^{i_{n}}, \ y=g_{1}^{j_{1}}\cdots g_{n}^{j_{n}}, \ z=g_{1}^{k_{1}}\cdots g_{n}^{k_{n}},$ then
$$\R(g_{1}^{i_{1}}\cdots g_{n}^{i_{n}}\cdot g_{1}^{j_{1}}\cdots g_{n}^{j_{n}},g_{1}^{k_{1}}\cdots g_{n}^{k_{n}})=\prod_{s=1}^n r_{ss}^{(i_{s}+j_{s})'k_{s}},$$ where $(i_{s}+j_{s})'$ denotes the remainder of division of $i_{s}+j_{s}$ by $m_{s}.$
Consider $\R(x , z)\R(y ,z)\frac{\o(z,x,y)\o(x,y,z)}{\o(x,z,y)}.$ By direct calculation, one has
$$\frac{\o(z,x,y)\o(x,y,z)}{\o(x,z,y)}=\prod_{l=1}^{n}\zeta_{m_{l}}^{a_{l}k_{l}
[\frac{i_{l}+j_{l}}{m_l}]}.$$
Therefore,
\begin{eqnarray*}
&&\R(x,z)\R(y,z)\frac{\o(z,x,y)\o(x,y,z)}{\o(x,z,y)}\\
&=&\prod_{s=1}^n r_{ss}^{(i_{s}+j_{s})k_{s}}\prod_{l=1}^{n}\zeta_{m_{l}}^{a_{l}k_{l}
[\frac{i_{l}+j_{l}}{m_{l}}]}\\
&=&\prod_{s=1}^n r_{ss}^{((i_{s}+j_{s})'+[\frac{i_{s}+j_{s}}{m_{s}}]m_{s})k_{s}}\prod_{l=1}^{n}\zeta_{m_{l}}^{a_{l}k_{l}
[\frac{i_{l}+j_{l}}{m_{l}}]}\\
&=&\prod_{s=1}^n r_{ss}^{(i_{s}+j_{s})'k_{s}}\prod_{l=1}^{n} c_{ll}^{[\frac{i_{l}+j_{l}}{m_{l}}]m_{l}k_{l}}\prod_{l=1}^{n}\zeta_{m_{l}}^{a_{l}k_{l}
[\frac{i_{l}+j_{l}}{m_{l}}]}\\
&=&\prod_{s=1}^n r_{ss}^{(i_{s}+j_{s})'k_{s}}\prod_{l=1}^{n}\zeta_{m_{l}}^{-a_{l}k_{l}
[\frac{i_{l}+j_{l}}{m_{l}}]}\prod_{l=1}^{n}\zeta_{m_{l}}^{a_{l}k_{l}
[\frac{i_{l}+j_{l}}{m_{l}}]}.
\end{eqnarray*}
This implies that $$\R(x,z)\R(y,z)\frac{\o(z,x,y)\o(x,y,z)}{\o(x,z,y)}=\prod_{s=1}^n r_{ss}^{(i_{s}+j_{s})'k_{s}}=\R(xy,z).$$

The sufficiency is proved.
\end{proof}

\section{The Dijkgraaf-Witten invariant of the $n$-torus}
In this section, we give a formula of the Dijkgraaf-Witten invariant for an arbitrary $n$-torus $T^n$ associated to finite abelian groups. In the special case of $n=2,$ we recover and improve some results obtained in \cite{turaev}. This is due to the fact that as we have an explicit formula of $2$-cocycles, we are able to derive dimension formulas for irreducible projective representations of finite abelian groups. This is of independent interest.

\subsection{Definition of DW invariants}
Just as its name implies, such an invariant of 3-manifolds was introduced by Dijkgraaf and Witten in \cite{dw}. Then it was generalized to arbitrary dimension in \cite{freed} by Freed.

Now we recall briefly the definition of DW invariants. The reader is referred to \cite{dw, freed, turaev} for more details.
Let $G$ be a finite group and let $[\o]\in \H^n(BG;\k^*)$. For a closed oriented $n$-manifold $M$, the Dijkgraaf-Witten invariant of $M$ is defined as 
$$Z^{[\o]}(M)=\frac{1}{|G|}\sum_{\phi:\pi_1(M)\to G}\langle f_{\phi}^*[\o],[M]\rangle,$$
where $f_{\phi}:M\to BG$ is a map inducing $\phi$ on the fundamental group which is determined by $\phi$ up to homotopy, $[M]$ is the fundamental class of $M$, and $\langle,\rangle$ is the pairing $\H^n(M;\k^*) \times \H_n(M;\mathbb{Z})\to \k^*$.


\subsection{The DW invariant of the $n$-torus}
The DW invariant of the $n$-torus is known to be the ground state degeneracy, which is the dimension of a Hilbert space, hence an integer. Some special cases were computed in \cite{ww, wen}.

Let $\mathbb{Z}_d$ denote the quotient ring $\mathbb{Z}/d\mathbb{Z}$ and $M_n(\mathbb{Z}_d)$ the ring of $d \times d$ matrices with entries in $\mathbb{Z}_d.$ Fix a $d$-th primitive root $\xi$ of 1 and define $$N_n(d):=\frac{\sum_{A\in M_n(\mathbb{Z}_d)}\xi^{\det A}}{d^n}.$$

\begin{lemma}\label{lemma}
The function $N_n(d)$ is integer-valued and is multiplicative on $d$, that is, if $d=d_1d_2$ with $(d_1,d_2)=1$, then 
$N_n(d)=N_n(d_1)N_n(d_2)$. Moreover, for a prime $p,$ 
$$N_n(p^m)=\sum_{i=1}^mp^{m(n-2)}p^{(m-i)(n-2)(n-1)}N_{n-1}(p^i)(p^{ni}-p^{n(i-1)})+p^{m(n-2)n}.$$ 
\end{lemma}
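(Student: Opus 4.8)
The plan is to analyze the Gauss-sum-like quantity $N_n(d) = d^{-n}\sum_{A\in M_n(\mathbb{Z}_d)}\xi^{\det A}$ by three successive reductions: first showing it is integer-valued, then multiplicative, then deriving the prime-power recursion. For integrality, the cleanest route is to recognize $N_n(d)$ as the Dijkgraaf-Witten invariant $Z^{[\omega]}(T^n)$ of the $n$-torus for a suitable top cocycle $\omega$ on $\mathbb{Z}_d^n$: by the corollary to Theorem \ref{complete}, $\H^n(\mathbb{Z}_d^n,\k^*)\cong\mathbb{Z}_d$, and the generator paired against $[T^n]=[\mathbb{Z}_d^n]$ produces exactly a $\det$ in the exponent (a homomorphism $\pi_1(T^n)\to\mathbb{Z}_d^n$ is an $n\times n$ matrix over $\mathbb{Z}_d$, and the pairing of the fundamental class with the pulled-back top cohomology class is $\xi$ to the determinant of that matrix). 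Since the DW invariant of $T^n$ is the ground state degeneracy, hence a nonnegative integer, integrality follows for free; alternatively one can argue directly by Galois theory, noting $\sum_A\xi^{\det A}$ lies in $\mathbb{Z}[\xi]$ and is fixed by $\mathrm{Gal}(\mathbb{Q}(\xi)/\mathbb{Q})$ because replacing $\xi$ by $\xi^t$ with $(t,d)=1$ can be absorbed by scaling one row of $A$, a bijection of $M_n(\mathbb{Z}_d)$; so the sum is a rational algebraic integer, and after dividing by $d^n$ one checks it is still integral, most comfortably via the multiplicativity and prime-power formula below.

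For multiplicativity, I would apply the Chinese Remainder Theorem: if $d=d_1d_2$ with $(d_1,d_2)=1$ then $M_n(\mathbb{Z}_d)\cong M_n(\mathbb{Z}_{d_1})\times M_n(\mathbb{Z}_{d_2})$ as rings, $\det$ is compatible with this decomposition, and writing $\xi=\xi_1\xi_2$ for primitive $d_i$-th roots $\xi_i$ (choosing exponents via Bézout so that $\xi_1=\xi^{d_2 u}$, $\xi_2=\xi^{d_1 v}$ are primitive of the right orders and $\xi_1\xi_2=\xi$ after adjusting) the sum factors as a product of two sums. Dividing by $d^n=d_1^n d_2^n$ then gives $N_n(d)=N_n(d_1)N_n(d_2)$ directly.

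The substantive part is the prime-power recursion for $N_n(p^m)$. Here I would stratify $M_n(\mathbb{Z}_{p^m})$ by the $p$-adic behavior of the first column (or first row) of $A$. Split into two cases: either the first column is $\equiv 0 \pmod p$, contributing matrices whose determinant is divisible by $p$ in a controlled way, or the first column has a unit entry modulo $p$. In the unit case, by elementary row/column operations over $\mathbb{Z}_{p^m}$ (which are measure-preserving bijections on the relevant sets and preserve $\det$ up to sign, which is harmless since we can also negate a row) one can clear that column to reduce to an $(n-1)\times(n-1)$ block, and the reduction contributes the factor $N_{n-1}(\cdot)$ together with combinatorial powers of $p$ counting the clearing operations and the off-diagonal freedom. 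Iterating the "first column divisible by $p$" case peels off a factor of $p$ from the relevant modulus, which is what produces the sum over $i$ from $1$ to $m$: writing the first column as $p^{m-i}$ times a vector that is a unit mod $p$ (so $i$ records the exact $p$-valuation level, with the boundary term $p^{m(n-2)n}$ corresponding to the first column being fully $0$ mod $p^m$). Carefully bookkeeping the number of matrices in each stratum — the count $p^{ni}-p^{n(i-1)}$ is the number of first columns of exact valuation type $i$, and the powers $p^{m(n-2)}$, $p^{(m-i)(n-2)(n-1)}$ track the remaining entries and the column-clearing transformations — yields the stated formula.

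The main obstacle will be the last step: getting all the exponents of $p$ exactly right in the stratification. One has to be scrupulous about (a) how many first columns have each $p$-valuation profile, (b) how the determinant of the reduced matrix relates to $\det A$ modulo $p^m$ versus modulo $p^i$ (the reduced problem naturally lives mod $p^i$, not mod $p^m$, which is why $N_{n-1}(p^i)$ appears rather than $N_{n-1}(p^m)$), and (c) the number and nature of the admissible row/column operations used to clear a column, since these must be genuine bijections on the summation domain that leave $\xi^{\det A}$ invariant. I expect that organizing the computation as: fix the valuation index $i$ of the first column, count the columns, perform the clearing, identify the residual sum as $d^{\text{(something)}}$ times a lower $N_{n-1}(p^i)$, and finally reassemble — will make the exponent arithmetic transparent. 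Once the recursion is established, integrality of $N_n(p^m)$ follows by induction on $n$ (base case $N_1(d)=d^{-1}\sum_{a\in\mathbb{Z}_d}\xi^a = 0$ for $d>1$ and $=1$ for $d=1$, both integers), and then multiplicativity upgrades this to integrality of $N_n(d)$ for all $d$, completing the lemma.
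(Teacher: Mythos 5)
Your plan is essentially sound and, once the bookkeeping is done, does reproduce the lemma; but it is a genuinely different organization from the paper's, and the paper's first move is worth knowing because it makes two of your three steps trivial. The paper begins by Laplace-expanding $\det A$ along the first row and summing out $a_{11},\dots,a_{1n}$ first: each geometric sum $\sum_{a_{1j}}\xi^{a_{1j}A_{1j}}$ is $d$ or $0$ according as $d\mid A_{1j}$ or not, so $N_n(d)$ equals the \emph{number} of $(n-1)\times n$ matrices over $\mathbb{Z}_d$ all of whose $(n-1)$-minors vanish. Integrality is then immediate (no Galois argument, no appeal to ground-state degeneracy --- which you should avoid anyway, since the paper uses this lemma to \emph{compute} the DW invariant, so invoking its integrality here is circular), and multiplicativity is CRT applied to this set of matrices rather than to the Gauss sum. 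The paper then stratifies by the additive order $p^i$ of the first row of that $(n-1)\times n$ matrix, normalizes it to $(p^{m-i},0,\dots,0)$ by an invertible matrix, and counts. Your version stratifies $M_n(\mathbb{Z}_{p^m})$ directly by the valuation of the first column and reduces the exponential sum to a lower-dimensional one; this works, and I checked that your strata produce exponents $-m+(m-i)(n-1)^2+i(n-1)$, which indeed equals the stated $m(n-2)+(m-i)(n-2)(n-1)$. Two points you would need to tighten when executing it: (a) clearing the first column to $(p^{m-i},0,\dots,0)^T$ requires left multiplication by a general element of $GL_n(\mathbb{Z}_{p^m})$, whose determinant is an arbitrary unit, not merely $\pm1$ as you assert; this replaces $\xi$ by $\xi^u$, and you must then invoke the independence of $\sum_C\zeta^{\det C}$ from the choice of primitive root (your own row-scaling bijection supplies this, but it is needed here, not only in the integrality step); (b) the residual sum over the $(n-1)\times(n-1)$ block lives over $\mathbb{Z}_{p^m}$ while the relevant root of unity $\xi^{up^{m-i}}$ has order $p^i$, so one must count lifts ($p^{(m-i)(n-1)^2}$ of them per matrix mod $p^i$) to see $N_{n-1}(p^i)$ emerge --- you flag this correctly but it is exactly the point where the exponents are earned. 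In short: correct strategy, different decomposition, with the paper's row-summation trick buying integrality and a cleaner combinatorial stratification for free.
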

\begin{proof}
Take $A=(a_{ij})\in M_n(\mathbb{Z}_d).$ Then
$\det A=a_{11}A_{11}+\cdots+a_{1n}A_{1n}$ where $A_{ij}$ is the algebraic cofactor of $a_{ij}$ and thus
\begin{eqnarray*}
\sum_{A\in M_n(\mathbb{Z}_d)}\xi^{\det A}&=&\sum_{a_{11}=0}^{d-1}\xi^{a_{11}A_{11}}\cdots\sum_{a_{1n}=0}^{d-1}\xi^{a_{1n}A_{1n}}\\
&=&d^n\#\{B\in M_{(n-1)\times n}(\mathbb{Z}_d)|\text{all }(n-1)\text{-minors of }B\text{ are }0\}.
\end{eqnarray*}
Hence $N_n(d)=\#\{B\in M_{(n-1)\times n}(\mathbb{Z}_d)|\text{all }(n-1)\text{-minors of }B\text{ are }0\}.$

Assume $B=(b_{ij})\in M_{(n-1)\times n}(\mathbb{Z}_d)$ is such a matrix all of whose $(n-1)$-minors are $0.$ Define $\text{ord}(b_{11},\dots,b_{1n})$ to be the smallest integer $r$ such that $d|rb_{11}, \ \dots, \ d|rb_{1n}.$ Clearly, $\text{ord}(b_{11},\dots,b_{1n})|d$. Now suppose $d=p^m$ where $p$ is prime.
If $\text{ord}(b_{11},\dots,b_{1n})=p^i$, then $p^ib_{11}=p^m\widetilde{b_{11}},  \ \dots, \ p^ib_{1n}=p^m\widetilde{b_{1n}}$. For $i\geq1$, if $p|\widetilde{b_{11}}, \ \dots, \ p|\widetilde{b_{1n}}$, then $\text{ord}(b_{11},\dots,b_{1n})\leq p^{i-1}$, contradiction.
So we may assume, without loss of generality, that $p \nmid \widetilde{b_{11}}.$ In this case, the matrix $P:=\left(\begin{array}{cccc}\overline{\widetilde{b_{11}}}&\overline{\widetilde{b_{12}}}&\ldots&\overline{\widetilde{b_{1n}}}\\
\overline{0}&\overline{1}&\ldots&\overline{0}\\
\vdots&\vdots&\ddots&\vdots\\
\overline{0}&\overline{0}&\ldots&\overline{1}\end{array}\right)$ is invertible in $M_n(\mathbb{Z}_{p^m}).$ Thus obviously,
$(\overline{b_{11}},\dots,\overline{b_{1n}})P^{-1}=(\overline{p^{m-i}},\overline{0},\dots,\overline{0})$. Assume $(\overline{b_{i1}},\dots,\overline{b_{in}})P^{-1}=(\overline{b_{i1}'},\dots,\overline{b_{in}'})$ for $i=2,\dots,n-1$. 
Since all $(n-1)$-minors of $B$ are 0, all $(n-1)$-minors of $\left(\begin{array}{cccc}\overline{p^{m-i}}&\overline{0}&\ldots&\overline{0}\\
\overline{b_{21}'}&\overline{b_{22}'}&\ldots&\overline{b_{2n}'}\\
\vdots&\vdots&\ddots&\vdots\\
\overline{b_{n-1,1}'}&\overline{b_{n-1,2}'}&\ldots&\overline{b_{n-1,n}'}\end{array}\right)$ are 0. Hence all $(n-2)$-minors of $\left(\begin{array}{ccc}\overline{b_{22}'}&\ldots&\overline{b_{2n}'}\\
\vdots&\ddots&\vdots\\
\overline{b_{n-1,2}'}&\ldots&\overline{b_{n-1,n}'}\end{array}\right)$ are 0 modulo $p^i$.
So we have
$$N_n(p^m)=\sum_{i=1}^mp^{m(n-2)}p^{(m-i)(n-2)(n-1)}N_{n-1}(p^i)(p^{ni}-p^{n(i-1)})+p^{m(n-2)n}.$$

Denote $S_n(d)=\{B\in M_{(n-1)\times n}(\mathbb{Z}_d)|\text{all }(n-1)\text{-minors of }B\text{ are }0\}$.
Then $B\mapsto(B\mod d_1,B\mod d_2)$ defines a map from $S_n(d_1d_2)$ to $S_n(d_1)\times S_n(d_2)$.
If $(d_1,d_2)=1$, then this map is clearly injective and surjective by the Chinese Remainder Theorem.
Hence $N_n(d_1d_2)=N_n(d_1)N_n(d_2)$.
So if $d=p_1^{m_1}\cdots p_r^{m_r}$ where $p_1,\dots,p_r$ are distinct primes, then $N_n(d)=N_n(p_1^{m_1})\cdots N_n(p_r^{m_r})$.
\end{proof}
\begin{theorem}\label{mt}
The Dijkgraaf-Witten invariant of the $n$-torus $T^n$ for a finite abelian group $G$ is 
\begin{equation}\label{dwtorus}
Z^{[\o]}(T^n)=\frac{1}{|G|}\sum_{f_1,\dots,f_n\in G}\frac{\prod_{\sigma\in A_n}\o(f_{\sigma(1)},\dots,f_{\sigma(n)})}{\prod_{\sigma\in S_n\setminus A_n}\o(f_{\sigma(1)},\dots,f_{\sigma(n)})}.
\end{equation}
Let $G=\mathbb{Z}_{m_1}\times\cdots\times\mathbb{Z}_{m_l}$ where $m_i|m_{i+1}$ for $1\le i\le l-1$.
If $l<n$, then $Z^{[\o]}(T^n)=|G|^{n-1}$.
If $l=n$, then $Z^{[\o]}(T^n)=\frac{|G|^{n-1}}{d^{n(n-1)}}N_n(d)$ where $d=\frac{m_1}{(m_1,a_{1\cdots n})}$.
If $l>n$, then
$$Z^{[\o]}(T^n)=\frac{1}{|G|}\sum_{A}\prod_{1\leq r_1<\cdots<r_n\leq l}\zeta_{m_{r_1}}^{a_{r_1\cdots r_n}\det A\left(\begin{array}{ccc}1&\ldots&n\\r_1&\ldots&r_n\end{array}\right)}$$ where 
$A=(\alpha_{ij})_{n \times l}$ and $0\leq \alpha_{ij}<m_j$ for $1\leq i\leq n$.
\end{theorem}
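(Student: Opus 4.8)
The plan is to first derive the closed formula \eqref{dwtorus} from the definition of $Z^{[\o]}$, and then to specialize it by feeding in the explicit cocycle formula \eqref{barcocycleabelian} and performing a Gauss-sum manipulation. For the first step, recall that $T^n$ is a $K(\Z^n,1)$, so $\pi_1(T^n)\cong\Z^n$, and since $G$ is abelian the homomorphisms $\phi\colon\pi_1(T^n)\to G$ correspond bijectively to the tuples $(f_1,\dots,f_n)\in G^n$ via $\phi\mapsto(\phi(e_1),\dots,\phi(e_n))$, where the $e_i$ are the standard generators. The fundamental class $[T^n]\in\H_n(T^n;\Z)\cong\H_n(\Z^n;\Z)$ is represented on the normalized bar complex by the shuffle cycle $\sum_{\sigma\in S_n}(-1)^{\sigma}[e_{\sigma(1)}|\cdots|e_{\sigma(n)}]$, the top class of the exterior algebra $\H_*(\Z^n;\Z)$. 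Pushing this cycle forward along $f_\phi$ and pairing it against $\o$---using that the coefficient group $\k^*$ is \emph{multiplicative}, so the pairing turns a signed sum of bar chains into a signed product of values of $\o$---gives $\langle f_\phi^*[\o],[T^n]\rangle=\prod_{\sigma\in S_n}\o(f_{\sigma(1)},\dots,f_{\sigma(n)})^{(-1)^{\sigma}}$, which is exactly the ratio appearing in \eqref{dwtorus}. Averaging over $\phi$ and dividing by $|G|$ yields \eqref{dwtorus}.

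For the second step I would substitute \eqref{barcocycleabelian} (with cocycle degree $k=n$ and $l$ cyclic factors, so that its summation index ``$l$'' now plays the role of a number of blocks $j$) into \eqref{dwtorus}. Since $\prod_{\sigma}\bigl(\prod\zeta^{\bullet}\bigr)^{(-1)^{\sigma}}$ factors as a product over the individual $\zeta$-monomials indexed by $(r_1<\cdots<r_j;\lambda_1,\dots,\lambda_j)$, it suffices to antisymmetrize each monomial separately. The crucial observation is that every monomial with $j<n$ is invariant under a suitable transposition of the arguments $(f_1,\dots,f_n)$: as $\lambda_1+\cdots+\lambda_j=n$ with $j<n$, some block $[a_i,b_i]$ satisfies $\lambda_i\ge2$, and inspecting the piecewise definition of $\eta_{r_i,[a_i,b_i]}^{\alpha}$ one sees it contains, as a genuine factor, some $[\tfrac{\alpha_{c+1,\,r_i}+\alpha_{c,\,r_i}}{m_{r_i}}]$ with $\{c,c+1\}\subseteq[a_i,b_i]$; the transposition $(c\ c{+}1)$ fixes this factor, and being interior to a single block of the consecutive-interval partition of $[1,n]$ it fixes every $\eta_{r_{i'},[a_{i'},b_{i'}]}^{\alpha}$ and leaves the sign $(-1)^{\sum_{i<j}\lambda_i\lambda_j}$ unchanged. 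Pairing $\sigma$ with $\sigma\cdot(c\ c{+}1)$ then cancels the contribution of such a monomial to the product. Thus only the top monomials ($j=n$, all $\lambda_i=1$) survive; for these $[a_u,b_u]=\{n-u+1\}$, so $\eta_{r_u,[a_u,b_u]}^{\alpha}=\alpha_{n-u+1,\,r_u}$, and reindexing the $S_n$-sum by the longest element $w_0$ turns the antisymmetrized exponent of $\zeta_{m_{r_1}}$ into $a_{r_1\cdots r_n}$ times the $n\times n$ minor of $A:=(\alpha_{ij})$ on columns $r_1,\dots,r_n$ (the coefficient $(-1)^{\binom n2}$ cancelling the sign $(-1)^{\binom n2}$ of $w_0$), where the $i$-th row of $A$ records the $\Z_{m_j}$-coordinates of $f_i$.

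The three cases now drop out. If $l<n$ there are no subsets $r_1<\cdots<r_n$ of $\{1,\dots,l\}$, hence no top monomial, the summand of \eqref{dwtorus} is identically $1$, and $Z^{[\o]}(T^n)=|G|^n/|G|=|G|^{n-1}$. If $l>n$ the computation above reproduces the stated formula at once. If $l=n$ the only admissible tuple is $(1,\dots,n)$, so the summand equals $\xi^{\det A}$ with $\xi:=\zeta_{m_1}^{a_{1\cdots n}}$, a primitive $d$-th root of unity for $d=m_1/(m_1,a_{1\cdots n})$ (using $0\le a_{1\cdots n}<m_1$); since $\det A\bmod d$ depends only on $A\bmod d$ and $d\mid m_1\mid m_j$ for all $j$, grouping the matrices $A\in\prod_{j}\Z_{m_j}^{\,n}$ according to their image in $M_n(\Z_d)$ hits each residue class exactly $|G|^n/d^{n^2}$ times, so $\sum_A\xi^{\det A}=\tfrac{|G|^n}{d^{n^2}}\sum_{A'\in M_n(\Z_d)}\xi^{\det A'}=\tfrac{|G|^n}{d^{n^2}}\,d^n N_n(d)$, and dividing by $|G|$ gives $Z^{[\o]}(T^n)=\tfrac{|G|^{n-1}}{d^{n(n-1)}}N_n(d)$.

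The main obstacle, I expect, is the bookkeeping in the second step: one must check carefully---separately for even and odd block lengths---that the required floor expression really occurs as a full factor of $\eta_{r_i,[a_i,b_i]}^{\alpha}$, so that the exhibited transposition genuinely fixes the entire $\zeta$-monomial, and then track all the signs through the $w_0$-reindexing of the surviving top monomial. The input needed for the first step (the shuffle representative of $[T^n]$ together with naturality of the pairing) is classical.
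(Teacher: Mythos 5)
Your proposal is correct and follows essentially the same route as the paper: the only cosmetic difference is that you realize the fundamental class of $T^n$ algebraically as the shuffle cycle $\sum_{\sigma\in S_n}(-1)^{\sigma}[e_{\sigma(1)}|\cdots|e_{\sigma(n)}]$, whereas the paper triangulates the $n$-cube into $n!$ simplices indexed by permutations, which amounts to the same chain. The key steps --- cancelling every monomial with some $\lambda_i\ge 2$ by pairing $\sigma$ with $\sigma\cdot(b_i\ b_i{-}1)$, identifying the surviving top monomials with minors of $(\alpha_{ij})$ after reindexing by $w_0$, and reducing the $l=n$ case to $N_n(d)$ by counting fibers over $M_n(\mathbb{Z}_d)$ --- all match the paper's argument.
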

\begin{proof}
The $n$-torus $T^n$ is obtained by gluing parallel edges of an $n$-dimensional cube. The cube can be subdivided into $n!$ $n$-simplexes such that each $n$-simplex has $n$ successive edges in common with the cube.  
If we label the remaining $n$ edges of the cube after gluing by $f_1,\dots,f_n\in G$, then each $n$-simplex is uniquely determined by an permutation $(f_1',\dots,f_n')$ of $(f_1,\dots,f_n)$.
The fundamental class $[T^n]$ is represented by an $n$-chain $\sigma:\Delta^n\to T^n$ where $\sigma$ is the sum of those $n$-simplexes with the sign of which is positive if the permutation is even and negative otherwise.
By \cite[p89]{hatcher} we may identify $\H^n(BG;\k^*)$ and $\H^n(G;\k^*).$ Then when $\phi$ runs over all group homomorphisms from $\pi_1(T^n)$ to $G$, we have
$$\o((f_{\phi})_*([T^n]))=\frac{\prod_{\sigma\in A_n}\o(f_{\sigma(1)},\dots,f_{\sigma(n)})}{\prod_{\sigma\in S_n\setminus A_n}\o(f_{\sigma(1)},\dots,f_{\sigma(n)})}$$ 
where $f_1,\dots,f_n$ run over $G$.
Hence \eqref{dwtorus} holds.

Now let $f_i=(\alpha_{i1},\dots,\alpha_{il})$ where $0\le \alpha_{ij}<m_j$ for $1\le i\le n$. 
Recall that 
$$\o(f_1,\dots,f_n)=\prod_{k=1}^{n}\prod_{\begin{array}{ccc}1\leq r_{1}<\cdots<r_{k}\leq l\\\lambda_1+\cdots+\lambda_k=n,\lambda_1\text{ odd}\\\lambda_i\ge1\text{ for }1\le i\le k\end{array}}\zeta_{m_{r_1}}^{(-1)^{\sum\limits_{1\leq i<j\leq k}\lambda_{i}\lambda_{j}}\eta_{r_{1},[a_{1},b_{1}]}^{\alpha}\cdots\eta_{r_{k},[a_{k},b_{k}]}^{\alpha}a_{r_1^{\lambda_1}\cdots r_k^{\lambda_k}}}$$
where $a_{k}=1, \ b_{k}=\lambda_{k}, \ \dots, \ a_{1}=\lambda_{2}+\cdots+\lambda_{k}+1, \ b_{1}=\lambda_{1}+\cdots+\lambda_{k}=n$.

\begin{eqnarray*}
Z^{[\o]}(T^n)&=&\frac{1}{|G|}\sum_{f_1,\dots,f_n\in G}\frac{\prod_{\sigma\in A_n}\o(f_{\sigma(1)},\dots,f_{\sigma(n)})}{\prod_{\sigma\in S_n\setminus A_n}\o(f_{\sigma(1)},\dots,f_{\sigma(n)})}\\
&=&\frac{1}{|G|}\sum_{f_1,\dots,f_n\in G}\prod_{k=1}^{n}\prod_{\begin{array}{ccc}1\leq r_{1}<\cdots<r_{k}\leq l\\\lambda_1+\cdots+\lambda_k=n,\lambda_1\text{ odd}\\\lambda_i\ge1\text{ for }1\le i\le k\end{array}}\\
&&\quad \frac{\prod_{\sigma\in A_n}\zeta_{m_{r_1}}^{(-1)^{\sum\limits_{1\leq i<j\leq k}\lambda_{i}\lambda_{j}}\eta_{r_{1},[\sigma(a_{1}),\sigma(b_{1})]}^{\alpha}\cdots\eta_{r_{k},[\sigma(a_{k}),\sigma(b_{k})]}^{\alpha}a_{r_1^{\lambda_1}\cdots r_k^{\lambda_k}}}}{\prod_{\sigma\in S_n\setminus A_n}\zeta_{m_{r_1}}^{(-1)^{\sum\limits_{1\leq i<j\leq k}\lambda_{i}\lambda_{j}}\eta_{r_{1},[\sigma(a_{1}),\sigma(b_{1})]}^{\alpha}\cdots\eta_{r_{k},[\sigma(a_{k}),\sigma(b_{k})]}^{\alpha}a_{r_1^{\lambda_1}\cdots r_k^{\lambda_k}}}}
\end{eqnarray*}

If $(\lambda_1,\dots,\lambda_k)$ is a partition of $n$ and $\lambda_i>1$ for some $i\in\{1,\dots,k\}$, then $b_i>a_i$ and $\eta_{r_i,[a_i,b_i]}^{\alpha}$ contains the term $[\frac{\alpha_{b_i,r_i}+\alpha_{b_i-1,r_i}}{m_{r_i}}]$. 
Let $\tau$ be the transposition $(b_i,b_i-1)$, then 
$$\zeta_{m_{r_1}}^{(-1)^{\sum\limits_{1\leq i<j\leq k}\lambda_{i}\lambda_{j}}\eta_{r_{1},[a_{1},b_{1}]}^{\alpha}\cdots\eta_{r_{k},[a_{k},b_{k}]}^{\alpha}a_{r_1^{\lambda_1}\cdots r_k^{\lambda_k}}}
=\zeta_{m_{r_1}}^{(-1)^{\sum\limits_{1\leq i<j\leq k}\lambda_{i}\lambda_{j}}\eta_{r_{1},[\tau(a_{1}),\tau(b_{1})]}^{\alpha}\cdots\eta_{r_{k},[\tau(a_{k}),\tau(b_{k})]}^{\alpha}a_{r_1^{\lambda_1}\cdots r_k^{\lambda_k}}}.$$
Hence 
$$\frac{\prod_{\sigma\in A_n}\zeta_{m_{r_1}}^{(-1)^{\sum\limits_{1\leq i<j\leq k}\lambda_{i}\lambda_{j}}\eta_{r_{1},[\sigma(a_{1}),\sigma(b_{1})]}^{\alpha}\cdots\eta_{r_{k},[\sigma(a_{k}),\sigma(b_{k})]}^{\alpha}a_{r_1^{\lambda_1}\cdots r_k^{\lambda_k}}}}{\prod_{\sigma\in S_n\setminus A_n}\zeta_{m_{r_1}}^{(-1)^{\sum\limits_{1\leq i<j\leq k}\lambda_{i}\lambda_{j}}\eta_{r_{1},[\sigma(a_{1}),\sigma(b_{1})]}^{\alpha}\cdots\eta_{r_{k},[\sigma(a_{k}),\sigma(b_{k})]}^{\alpha}a_{r_1^{\lambda_1}\cdots r_k^{\lambda_k}}}}=1.$$
Therefore,
$$Z^{[\o]}(T^n)=\frac{1}{|G|}\sum_{f_1,\dots,f_n\in G}\prod_{1\leq r_{1}<\cdots<r_{n}\leq l}\frac{\prod_{\sigma\in A_n}\zeta_{m_{r_1}}^{(-1)^{\frac{n(n-1)}{2}}\alpha_{\sigma(n),r_1}\cdots\alpha_{\sigma(1),r_n}a_{r_1\cdots r_n}}}{\prod_{\sigma\in S_n\setminus A_n}\zeta_{m_{r_1}}^{(-1)^{\frac{n(n-1)}{2}}\alpha_{\sigma(n),r_1}\cdots\alpha_{\sigma(1),r_n}a_{r_1\cdots r_n}}}.
$$

Hence if $l<n$, then each summand of \eqref{dwtorus} is 1 and $Z^{[\o]}(T^n)=|G|^{n-1}$.
If $l=n$, then equation \eqref{dwtorus} becomes  
\begin{eqnarray*}
Z^{[\o]}(T^n)&=&\frac{1}{|G|}\sum_{A}(\zeta_{m_1}^{a_{1\cdots n}})^{\sum_{\sigma\in S_n}(-1)^{\text{sign of }\sigma}(-1)^{\frac{n(n-1)}{2}}\alpha_{\sigma(n)1}\cdots\alpha_{\sigma(1)n}}\\
&=&\frac{1}{|G|}\sum_{A}(\zeta_{m_1}^{a_{1\cdots n}})^{\sum_{\sigma\in S_n}(-1)^{\text{sign of }\sigma}\alpha_{\sigma(1)1}\cdots\alpha_{\sigma(n)n}}\\
&=&\frac{1}{|G|}\sum_{A}(\zeta_{m_1}^{a_{1\cdots n}})^{\det A}
\end{eqnarray*}
where $A=(\alpha_{ij})_{n \times l}$ and $0\leq \alpha_{ij}<m_j$ for $1\leq i\leq n$.
Denote $\xi=\zeta_{m_1}^{a_{1\cdots n}}$, then $\xi$ is a $d$-th primitive root of 1 where $d=\frac{m_1}{(m_1,a_{1\cdots n})}$. In this situation,
\begin{eqnarray*}
Z^{[\o]}(T^n)&=&\frac{1}{|G|}\sum_{A}\xi^{\det A}\\
&=&\frac{1}{|G|}(\frac{m_1}{d}\cdots\frac{m_n}{d})^n\sum_{A\in M_n(\mathbb{Z}_d)}\xi^{\det A}\\
&=&\frac{|G|^{n-1}}{d^{n^2}}\sum_{A\in M_n(\mathbb{Z}_d)}\xi^{\det A}.
\end{eqnarray*}
If $l>n$, the formula is similarly derived as the case $l=n$.
Hence Lemma \ref{lemma} completes the proof.
\end{proof}

\subsection{The DW invariant of $T^2$ and projective representations}
In \cite{turaev} Turaev observed the connection between DW invariants of surfaces and projective representations of finite groups. In case of $T^2,$ our Theorem \ref{mt} 
recovers some partial results of Turaev. Moreover, with a help of our explicit formula of $2$-cocycles, we are able to derive a formula for the dimension of an arbitrary projective representation of finite abelian groups. This is of independent interest on the one hand, and helps to improve some formulas in \cite{turaev} on the other hand.

Now let $G=\mathbb{Z}_{m_1}\times\cdots\times\mathbb{Z}_{m_l}$ with $m_1|m_2|\cdots|m_l$ and let $\o$ be a $2$-cocycle on $G$.
In case $k=2$, \eqref{barcocycleabelian} becomes
$$\o(g_1^{i_1}\cdots g_l^{i_l},g_1^{j_1}\cdots g_l^{j_l})=\prod_{1\leq r<s\leq l}\zeta_{m_r}^{-a_{rs}i_sj_r}.$$
Let $G_0$ be the set of all $\o$-regular elements in $G$, i.e.,
$$G_0=\{x\in G|\o(x,y)=\o(y,x)\text{ for all }y\in G\}.$$ It is well known that the number of irreducible $\o$-representations of $G$ is $|G_0|$ and all irreducible $\o$-representations of $G$ share a common dimension $d=\sqrt{\frac{|G|}{|G_0|}},$ see \cite{frucht, kar}. 

In the following we derive the formula of $|G_0|,$ hence of $d$ as well, in terms of the data $(a_{rs})_{1\leq r<s\leq l}$ of the given $2$-cocycle $\o.$ Consider the following antisymmetric $l \times l$-matrix
$$B=\left(\begin{array}{cccc}0&b_{12}&\ldots&b_{1l}\\-b_{12}&0&\ldots&b_{2l}\\\vdots&\vdots&\ddots&\vdots\\-b_{1l}&-b_{2l}&\ldots&0\end{array}\right)$$
where $b_{ij}=\frac{m_l}{m_i}a_{ij}$. Assume that the invariant factors of $B$ are $\lambda_1, \ \dots, \ \lambda_k$ with $\lambda_1|\lambda_2|\cdots|\lambda_k.$

\begin{proposition}
Keep the above notations. Then we have \[ |G_0|=\frac{|G|}{\frac{m_l}{(m_l,\lambda_1)}\cdots\frac{m_l}{(m_l,\lambda_k)}}, \quad \quad d=\sqrt{\frac{m_l}{(m_l,\lambda_1)}\cdots\frac{m_l}{(m_l,\lambda_k)}}. \]
\end{proposition}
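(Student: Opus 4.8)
The plan is to reduce the computation of $|G_0|$ to a linear-algebra problem over $\mathbb{Z}_{m_l}$ governed by the antisymmetric matrix $B$. First I would make the $\o$-regularity condition explicit: using the formula $\o(g_1^{i_1}\cdots g_l^{i_l},g_1^{j_1}\cdots g_l^{j_l})=\prod_{r<s}\zeta_{m_r}^{-a_{rs}i_sj_r}$ from the excerpt, an element $x=(x_1,\dots,x_l)$ is $\o$-regular precisely when $\o(x,y)=\o(y,x)$ for all $y\in G$, i.e. when $\prod_{r<s}\zeta_{m_r}^{-a_{rs}x_sy_r}=\prod_{r<s}\zeta_{m_r}^{-a_{rs}y_sx_r}$ for all $y$. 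Collecting the exponent of each $y_r$ and clearing denominators by multiplying through by $m_l/m_r$ (so that all terms live in $\mathbb{Z}_{m_l}$, using $m_r|m_l$), this says that $Bx\equiv 0\pmod{m_l}$, where $B=(b_{ij})$ with $b_{ij}=\tfrac{m_l}{m_i}a_{ij}$ for $i<j$ and $b_{ij}=-b_{ji}$, reading $x$ as a column vector with $x_j\in\mathbb{Z}_{m_l}$ via the natural inclusion $\mathbb{Z}_{m_j}\hookrightarrow\mathbb{Z}_{m_l}$. Care is needed because $x_j$ only ranges over $\mathbb{Z}_{m_j}$, not all of $\mathbb{Z}_{m_l}$; I would phrase $G$ as the subgroup $\bigoplus_j \tfrac{m_l}{m_j}\mathbb{Z}_{m_l}$ of $(\mathbb{Z}_{m_l})^l$, so that $G_0$ becomes the kernel of the map $G\to(\mathbb{Z}_{m_l})^l$, $x\mapsto Bx$.

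Next I would diagonalize. Let $P,Q\in GL_l(\mathbb{Z})$ be such that $PBQ=D=\mathrm{diag}(\lambda_1,\dots,\lambda_k,0,\dots,0)$ is the Smith normal form of $B$, with $\lambda_1|\cdots|\lambda_k$ the invariant factors. Since $P,Q$ are invertible over $\mathbb{Z}$ they act as automorphisms of $(\mathbb{Z}_{m_l})^l$; because $B$ is antisymmetric one can in fact take $Q=P^{\mathsf T}$, but this refinement is not needed. The subtlety is that $Q$ need not preserve the subgroup $G\subseteq(\mathbb{Z}_{m_l})^l$ defined by the mixed moduli $m_j$. However — and this is where the hypothesis $m_1|m_2|\cdots|m_l$ is essential — one checks that for the purpose of counting $\ker(B|_G)$ one may pass to the full group $(\mathbb{Z}_{m_l})^l$ at the cost of a uniform factor. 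Concretely, $|G|=\prod_j m_j$ while $|(\mathbb{Z}_{m_l})^l|=m_l^{\,l}$, and the index is $\prod_j(m_l/m_j)$; I would show that the same index relates $|G_0|$ to $|\ker(B\colon(\mathbb{Z}_{m_l})^l\to(\mathbb{Z}_{m_l})^l)|$, using that the divisibility of the $m_j$ makes $G$ a ``characteristic-like'' sublattice compatible with Smith normal form. Granting this, $|\ker(B\colon(\mathbb{Z}_{m_l})^l\to(\mathbb{Z}_{m_l})^l)|=\prod_{i=1}^k(m_l,\lambda_i)\cdot m_l^{\,l-k}$, since over $\mathbb{Z}_{m_l}$ the map $x\mapsto\lambda_i x$ has kernel of size $(m_l,\lambda_i)$ and the last $l-k$ coordinates are free. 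Dividing by the index $\prod_j(m_l/m_j)=m_l^{\,l}/|G|$ yields $|G_0|=|G|\cdot\prod_{i=1}^k\tfrac{(m_l,\lambda_i)}{m_l}$, which rearranges to $|G_0|=\dfrac{|G|}{\prod_{i=1}^k \tfrac{m_l}{(m_l,\lambda_i)}}$.

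Finally, the dimension formula follows immediately: by the cited results of \cite{frucht, kar} all irreducible $\o$-representations of $G$ have common dimension $d=\sqrt{|G|/|G_0|}$, so $d=\sqrt{\prod_{i=1}^k \tfrac{m_l}{(m_l,\lambda_i)}}$, as claimed.

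I expect the main obstacle to be the bookkeeping in the second paragraph: rigorously tracking how the Smith-normal-form change of basis interacts with the sublattice $G=\bigoplus_j\tfrac{m_l}{m_j}\mathbb{Z}_{m_l}$, since an arbitrary $Q\in GL_l(\mathbb{Z})$ does not preserve $G$. The resolution uses the tower $m_1|\cdots|m_l$ in an essential way — it is exactly this condition that lets one compare $\ker(B|_G)$ with $\ker(B)$ on the ambient group up to the clean index factor — and getting that comparison airtight (rather than merely heuristic) is the delicate point. Everything else is a routine kernel count over $\mathbb{Z}_{m_l}$ once $B$ is in diagonal form.
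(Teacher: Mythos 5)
There is a genuine gap in your central reduction step. You identify $G_0$ with the kernel of $B$ restricted to the subgroup $\bigoplus_j \tfrac{m_l}{m_j}\mathbb{Z}_{m_l}\subseteq(\mathbb{Z}_{m_l})^l$, i.e.\ you embed $G$ into $(\mathbb{Z}_{m_l})^l$ by $x_j\mapsto\tfrac{m_l}{m_j}x_j$ \emph{before} applying $B$. That identification is false. Take $G=\mathbb{Z}_2\times\mathbb{Z}_4$ with $a_{12}=1$, so $B=\left(\begin{smallmatrix}0&2\\-2&0\end{smallmatrix}\right)$ over $\mathbb{Z}_4$: the restriction of $B$ to $2\mathbb{Z}_4\oplus\mathbb{Z}_4$ has kernel $\{0,2\}\times\{0,2\}$ of order $4$, and this kernel contains $(2,0)$, which is the image of $g_1$; but $g_1$ is not $\o$-regular, since $\o(g_1,y)=1$ while $\o(y,g_1)=(-1)^{j_2}$, and indeed $|G_0|=2$. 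The point is that the regularity condition on $x=g_1^{i_1}\cdots g_l^{i_l}$ reads $Bi\equiv 0\pmod{m_l}$ where $i=(i_1,\dots,i_l)$ is the tuple of integer representatives $0\le i_r<m_r$, \emph{not} the rescaled tuple $(\tfrac{m_l}{m_r}i_r)_r$; so the relevant map relating $G$ to $(\mathbb{Z}_{m_l})^l$ is the quotient $\pi:(\mathbb{Z}_{m_l})^l\twoheadrightarrow\prod_r\mathbb{Z}_{m_r}\cong G$, not an inclusion. Your subsequent ``divide by the index'' identity $|G_0|=|\ker(B)|/\prod_r(m_l/m_r)$ is in fact correct, but it does not follow from your setup, and you explicitly leave its justification open as ``the delicate point.''

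Here is how the gap closes (and this is essentially what the paper does). The divisibility $m_r\mid m_s$ for $r<s$ gives $b_{sr}m_r\equiv 0\pmod{m_l}$ for all $r,s$, hence the condition $Bi\equiv 0\pmod{m_l}$ is invariant under $i_r\mapsto i_r+m_r$; equivalently $\ker\pi=\prod_r m_r\mathbb{Z}_{m_l}$ is contained in $\ker\bigl(B:(\mathbb{Z}_{m_l})^l\to(\mathbb{Z}_{m_l})^l\bigr)$. Therefore $\ker B$ is a union of cosets of $\ker\pi$, $G_0=\pi(\ker B)$, and $|G_0|=|\ker B|/\prod_r(m_l/m_r)$. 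In particular your worry about whether the Smith normal form change of basis $Q$ preserves a sublattice is a red herring: the Smith normal form is applied to $B$ on the full group $(\mathbb{Z}_{m_l})^l$, where $P,Q\in GL_l(\mathbb{Z})$ act as automorphisms and give $|\ker B|=\prod_{i=1}^k(m_l,\lambda_i)\,m_l^{\,l-k}$ with no compatibility issue. With the reduction corrected in this way, your kernel count and the final arithmetic, as well as the deduction of $d=\sqrt{|G|/|G_0|}$, are all correct and coincide with the paper's proof.
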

\begin{proof}
By direct computations, we have
\begin{eqnarray*}
|G_0|&=&\#\{(i_1,\dots,i_l)|0\le i_r<m_r\text{ for }1\le r\le l,\prod_{1\leq r<s\leq l}\zeta_{m_r}^{-a_{rs}i_sj_r}=\prod_{1\leq r<s\leq l}\zeta_{m_r}^{-a_{rs}i_rj_s}\\
&&\text{ for any }(j_1,\dots,j_l)\text{ where }0\le j_r<m_r\text{ for }1\le r\le l\}\\
&=&\frac{1}{\frac{m_l}{m_1}\cdots\frac{m_l}{m_{l-1}}}\#\{(i_1,\dots,i_l)|0\le i_r<m_l\text{ for }1\le r\le l,\left(\begin{array}{ccc}i_1&\ldots&i_l\end{array}\right)B\left(\begin{array}{ccc}j_1\\\vdots\\j_l\end{array}\right)\\
&&\equiv0\mod m_l\text{ for any }(j_1,\dots,j_l)\text{ where }0\le j_r<m_l\text{ for }1\le r\le l\}\\
&=&\frac{1}{\frac{m_l}{m_1}\cdots\frac{m_l}{m_{l-1}}}\#\{(i_1,\dots,i_l)|0\le i_r<m_l\text{ for }1\le r\le l,\lambda_ri_r\equiv0\mod m_l\text{ for }1\le r\le k\}\\
&=&\frac{1}{\frac{m_l}{m_1}\cdots\frac{m_l}{m_{l-1}}}(m_l,\lambda_1)\cdots(m_l,\lambda_k)m_l^{l-k}\\
&=&\frac{|G|}{\frac{m_l}{(m_l,\lambda_1)}\cdots\frac{m_l}{(m_l,\lambda_k)}}
\end{eqnarray*} and \[ d=\sqrt{\frac{|G|}{|G_0|}}=\sqrt{\frac{m_l}{(m_l,\lambda_1)}\cdots\frac{m_l}{(m_l,\lambda_k)}}. \]
\end{proof}

We recover a result of Turaev \cite{turaev} in the following
\begin{corollary}
Keep the previous assumptions and notations. We have $$Z^{[\o]}(T^2)=|G_0|=\frac{|G|}{\frac{m_l}{(m_l,\lambda_1)}\cdots\frac{m_l}{(m_l,\lambda_k)}}.$$
\end{corollary}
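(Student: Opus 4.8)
The second equality in the statement is precisely the content of the preceding Proposition, so the only new point is the first equality $Z^{[\o]}(T^2)=|G_0|$. I would prove this directly from the definition of the Dijkgraaf--Witten invariant together with elementary character theory, rather than through the case analysis of Theorem \ref{mt}. Specializing \eqref{dwtorus} to $n=2$, where $A_2=\{\id\}$ and $S_2\setminus A_2$ is the single transposition, gives
$$Z^{[\o]}(T^2)=\frac{1}{|G|}\sum_{x,y\in G}\frac{\o(x,y)}{\o(y,x)}.$$

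The key step is to observe that for each fixed $x\in G$ the map $\chi_x\colon G\to\k^*,\ y\mapsto \o(x,y)/\o(y,x)$, is a group homomorphism, and that by definition $G_0=\{x\in G\mid \chi_x\equiv 1\}$. To see that $\chi_x$ is a character I would pass to the central extension $1\to\k^*\to\widetilde{G}\to G\to 1$ classified by $\o$: choosing lifts $u_g$ with $u_gu_h=\o(g,h)u_{gh}$, one computes $u_xu_yu_x^{-1}u_y^{-1}=\o(x,y)/\o(y,x)=\chi_x(y)$ since $xy=yx$, and the commutator pairing of commuting elements is bimultiplicative; equivalently $\chi_x$ is the familiar commutator character from the projective representation theory of finite groups \cite{frucht,kar}. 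Once this is in place, orthogonality of characters of the finite abelian group $G$ gives $\sum_{y\in G}\chi_x(y)=|G|$ when $x\in G_0$ and $0$ otherwise, so that
$$Z^{[\o]}(T^2)=\frac{1}{|G|}\sum_{x\in G_0}|G|=|G_0|,$$
which combined with the Proposition yields the claimed formula.

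I do not expect a genuine obstacle here: the argument is short, and the only mildly technical point is the bimultiplicativity of $\chi_x$, which is classical. As a consistency check the same value is recovered from Theorem \ref{mt}: for $l<2$ one has $Z^{[\o]}(T^2)=|G|=|G_0|$ (the matrix $B$ vanishes); for $l=2$, Lemma \ref{lemma} gives $N_2(d)=1$, so $Z^{[\o]}(T^2)=|G|/d^2$ with $d=m_1/(m_1,a_{12})$, which matches $|G_0|$ since the invariant factors of $B=\left(\begin{smallmatrix}0&b_{12}\\-b_{12}&0\end{smallmatrix}\right)$ satisfy $m_2/(m_2,\lambda_1)=m_2/(m_2,\lambda_2)=d$; and for $l>2$ the exponential sum appearing in Theorem \ref{mt} collapses under the same character orthogonality.
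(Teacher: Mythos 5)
Your proof is correct, but it takes a genuinely different route from the paper's. The paper establishes the \emph{other} equality first: it specializes the explicit $2$-cocycle representative $\o(g_1^{i_1}\cdots g_l^{i_l},g_1^{j_1}\cdots g_l^{j_l})=\prod_{r<s}\zeta_{m_r}^{-a_{rs}i_sj_r}$ into Theorem \ref{mt}, rewrites the resulting double sum as a Gauss-type exponential sum governed by the antisymmetric matrix $B$, diagonalizes $B$ by its Smith normal form (whose images remain invertible in $M_l(\mathbb{Z}_{m_l})$), and evaluates the product of geometric sums to get $Z^{[\o]}(T^2)=\frac{|G|}{\frac{m_l}{(m_l,\lambda_1)}\cdots\frac{m_l}{(m_l,\lambda_k)}}$ directly; the identification with $|G_0|$ then comes from the preceding proposition. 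You instead prove $Z^{[\o]}(T^2)=|G_0|$ abstractly, for an arbitrary $2$-cocycle, by observing that $\chi_x(y)=\o(x,y)/\o(y,x)$ is the commutator character of the central extension and applying orthogonality, and only then invoke the proposition for the closed formula. Both arguments are complete; the bimultiplicativity of the commutator pairing that you need is indeed classical (it follows from $[a,bc]=[a,b]\cdot b[a,c]b^{-1}$ together with centrality of the commutators, and the insensitivity of commutators to the central discrepancy between $u_yu_z$ and $u_{yz}$). Your route is shorter and more conceptual: it makes manifest that $Z^{[\o]}(T^2)$ depends only on the cohomology class and equals the number of $\o$-regular elements, i.e.\ the number of irreducible $\o$-representations, which is exactly Turaev's observation. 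What the paper's computation buys is a demonstration of its explicit cocycle formulas in action, and it is the Smith-normal-form calculation --- which you still import via the preceding proposition --- that actually produces the numerical expression in terms of $(m_l,\lambda_i)$. Your consistency checks against Theorem \ref{mt} (in particular $N_2(d)=1$ and $\frac{m_2}{(m_2,\lambda_1)}=\frac{m_1}{(m_1,a_{12})}=d$ when $l=2$) are also correct.
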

\begin{proof}
If $l\ge2$, then
\begin{eqnarray*}
Z^{[\o]}(T^2)&=&\sum_A\prod_{1\leq r_1<r_2\leq l}\zeta_{m_{r_1}}^{a_{r_1r_2}\det A\left(\begin{array}{cc}1&2\\r_1&r_2\end{array}\right)}\\
&=&\frac{1}{(\frac{m_l}{m_1})^2\cdots(\frac{m_l}{m_{l-1}})^2}\sum_{0\leq\alpha_{ij}<m_l}\zeta_{m_l}^{\left(\begin{array}{ccc}\alpha_{11}&\ldots&\alpha_{1l}\end{array}\right)B\left(\begin{array}{ccc}\alpha_{21}\\\vdots\\\alpha_{2l}\end{array}\right)}
\end{eqnarray*}
where $A=\left(\begin{array}{ccc}\alpha_{11}&\ldots&\alpha_{1l}\\\alpha_{21}&\ldots&\alpha_{2l}\end{array}\right)$ and $0\leq\alpha_{ij}<m_j$.

By assumption, there exist two invertible integral matrices $P, \ Q \in GL_l(\mathbb{Z})$ such that 
$$B=P\left(\begin{array}{cccccc}\lambda_1&&&&&\\&\ddots&&&&\\&&\lambda_k&&&\\&&&0&&\\&&&&\ddots&\\&&&&&0\end{array}\right)Q.$$
Note that the images of $P$ and $Q$ in $M_l(\mathbb{Z}_{m_l})$ are also invertible. Hence 
\begin{eqnarray*}
&&\sum_{0\leq\alpha_{ij}<m_l}\zeta_{m_l}^{\left(\begin{array}{ccc}\alpha_{11}&\ldots&\alpha_{1l}\end{array}\right)B\left(\begin{array}{ccc}\alpha_{21}\\\vdots\\\alpha_{2l}\end{array}\right)}\\
&=&\sum_{0\leq\alpha_{ij}<m_l}\zeta_{m_l}^{\left(\begin{array}{ccc}\alpha_{11}&\ldots&\alpha_{1l}\end{array}\right)\left(\begin{array}{cccccc}\lambda_1&&&&&\\&\ddots&&&&\\&&\lambda_k&&&\\&&&0&&\\&&&&\ddots&\\&&&&&0\end{array}\right)\left(\begin{array}{ccc}\alpha_{21}\\\vdots\\\alpha_{2l}\end{array}\right)}\\
&=&m_l^{2(l-k)}\sum_{\alpha_{11}=0}^{m_l-1}\sum_{\alpha_{21}=0}^{m_l-1}\zeta_{m_l}^{\alpha_{11}\lambda_1\alpha_{21}}\cdots\sum_{\alpha_{1k}=0}^{m_l-1}\sum_{\alpha_{2k}=0}^{m_l-1}\zeta_{m_l}^{\alpha_{1r}\lambda_r\alpha_{2r}}
\end{eqnarray*}
It is well known that if $\xi^m=1$, then 
$$\sum_{i=0}^{m-1}\xi^{id}=\left\{\begin{array}{ll}m, &\text{if }\xi^d=1;\\0,&\text{if }\xi^d\ne1.\end{array}\right.$$
Thus we have 
\begin{eqnarray*}
Z^{[\o]}(T^2)&=&\frac{1}{|G|}\frac{1}{(\frac{m_l}{m_1})^2\cdots(\frac{m_l}{m_{l-1}})^2}m_l^{2(l-k)}m_l^k(m_l,\lambda_1)\cdots(m_l,\lambda_k) =\frac{|G|}{\frac{m_l}{(m_l,\lambda_1)}\cdots\frac{m_l}{(m_l,\lambda_k)}}.
\end{eqnarray*}

If $l=1$, $Z^{[\o]}(T^2)=|G|=|G_0|$. The conclusion also holds.
\end{proof}

\section*{Acknowledgment}
The authors are grateful to J. C. Wang for bringing the works \cite{spt, ww, wen} to their attention, in connection with Subsection 2.5 and Subsection 4.2.

\end{document}